\newcommand{\RR}{{{\rm I} \kern -.15em {\rm R} }}
\newtheorem{Theorem}{Theorem}[section]
\newtheorem{Lemma}[Theorem]{Lemma}
\newtheorem{Corollary}[Theorem]{Corollary}
\newtheorem{Remark}[Theorem]{Remark}
\newcommand{\C}{{{\rm l} \kern -.42em {\rm C} }}
\newcommand{\nat}{{{\rm I} \kern -.15em {\rm N} }}
\newcommand{\be}{\begin{equation}}
\newcommand{\ee}{\end{equation}}
\newcommand{\beq}{\begin{eqnarray}}
\newcommand{\eeq}{\end{eqnarray}}
\newcommand{\beqs}{\begin{eqnarray*}}
\newcommand{\eeqs}{\end{eqnarray*}}
\newcommand{\bt}{\begin{Theorem}}
\newcommand{\et}{\end{Theorem}}
\newcommand{\br}{\begin{Remark}}
\newcommand{\er}{\end{Remark}}
\newcommand{\bc}{\begin{Corollary}}
\newcommand{\ec}{\end{Corollary}}
\newcommand{\bl}{\begin{Lemma}}
\newcommand{\el}{\end{Lemma}}
\newcommand{\bd}{\begin{definition}}
\newcommand{\ed}{\end{definition}}
\renewcommand{\geq}{\geqslant}
\renewcommand{\ge}{\geqslant}
\renewcommand{\leq}{\leqslant}
\renewcommand{\le}{\leqslant}
\title{Energy decay for semilinear evolution equations with memory and time-dependent time delay feedback}
\author{
Elisa Continelli\footnote{Dipartimento di Ingegneria e Scienze dell'Informazione e Matematica, Universit\`{a} di L'Aquila, Via Vetoio, Loc. Coppito, 67010 L'Aquila Italy (\texttt{elisa.continelli@graduate.univaq.it}).}
\and
Cristina Pignotti\footnote{Dipartimento di Ingegneria e Scienze dell'Informazione e Matematica, Universit\`{a} di L'Aquila, Via Vetoio, Loc. Coppito, 67010 L'Aquila Italy (\texttt{cristina.pignotti@univaq.it}).}
}
\begin{document}

\textwidth=160 mm

\textheight=225mm

\parindent=8mm

\frenchspacing

\maketitle

\begin{abstract}
In this paper, we study well-posedness and exponential stability for semilinear second order evolution equations with memory and time-varying delay feedback. The time delay function is assumed to be continuous and bounded. Under a suitable assumption on  the delay feedback, we are able to prove that solutions corresponding to small initial data
are globally defined and satisfy an exponential decay estimate.
\end{abstract}

\vspace{5 mm}

\def\qed{\hbox{\hskip 6pt\vrule width6pt
height7pt
depth1pt  \hskip1pt}\bigskip}

%% {\bf 2000 Mathematics Subject Classification:}
%%35L05, 93D15

 %%{\bf Keywords and Phrases:}  wave equation,  delay feedbacks, stabilization

\section{Introduction}
\label{pbform}
%\hspace{5mm}

\setcounter{equation}{0}

Let $H$ be a Hilbert space and let $A$ be a positive self-adjoint operator with dense domain $D(A)$ in $H.$ Let us consider the system:
\begin{equation}
\label{modello}
\begin{array}{l}
\displaystyle{u_{tt}(t)+Au(t)- \int_0^{+\infty} \beta (s)Au(t-s) ds+k(t)BB^*u_t(t-\tau (t))=\nabla \psi( u(t)), }\\
\hspace{12 cm}\ t\in (0,+\infty),\\
%\hspace{3 cm}
\displaystyle{
 u(t)=u_0(t), \quad t\in (-\infty, 0],}\\
%\hspace{3 cm}\\
\displaystyle{ u_t(t)=g(t), \quad t\in [-\bar\tau,0],}
\end{array}
\end{equation}
where $\bar\tau$ is a fixed positive constant and the function $\tau: [0,+\infty)\rightarrow [0,+\infty)$ represents the time dependent time delay. We assume that the time delay is a continuous function satisfying
\begin{equation}\label{tau_bounded}
\tau(t)\le \bar\tau,\quad  \forall \ t\ge 0.
\end{equation}
In \eqref{modello}, $B$ is a bounded linear operator of $H$ into itself, $B^*$ denotes its adjoint. Also, $(u_0(\cdot), g(\cdot))$ are the initial data taken in suitable spaces and we denote with $u_1:=g(0)$.

Moreover, on the delay damping coefficient  $k:[-\bar\tau,+\infty)\rightarrow \RR$ we assume that   $k(\cdot)\in L^1_{loc} ([-\bar\tau,+\infty))$ and the integral on time intervals of length $\bar\tau$ is uniformly bounded, i.e. there exists a positive constant $K$ such that 
\begin{equation}\label{K}
\int_{t-\bar\tau}^t |k(s)| ds < K, \quad \forall t\geq 0.
\end{equation}
The memory kernel $\beta:[0,+\infty) \rightarrow [0,+\infty)$ satisfies the following classical  assumptions:
\begin{enumerate}[label=(\roman*)]
\item $\beta \in C^1(\RR^+) \cap L^1(\RR^+)$;
\item $\beta(0)=\beta_{0}>0$;
\item $\int_0^{+\infty} \beta(t)dt=\tilde{\beta}<1$;
\item $\beta'(t)\leq -\delta \beta(t)$, for some $\delta>0$.
\end{enumerate}
The nonlinear term satisfies some local Lipschitz continuity property; we will state precisely the assumptions later. 

Time delays are often present in applications and physical models and it is by now well-known that a time lag, even arbitrarily small, may induce instability effects also in systems that are uniformly exponentially stable in the absence of time delays (see e.g. \cite{Datko, NP, XYL}). Nevertheless, suitable choices of the time delay value can return stability (cf. \cite{Gugat, GugatT}) as well as suitable
feedback laws (cf. \cite{NP, XYL}). 

Here, we are interested in studying well--posedness and exponential stability, for small initial data, for the above model \eqref{modello}.
Our results extend the ones in \cite{PP_memory} where the time delay is assumed to be constant (see also \cite{Feng2} for the constant delay case). The extension is not trivial since the classical step by step argument, often used to deal with time delay models, does not work in this case. Moreover, our exponential decay proof, valid also in the constant case, significantly simplifies the scheme of \cite{PP_memory}. 

Time-dependent time delays are also considered in the paper \cite{KP} dealing with abstract evolution equations with delay but, in the case of semilinear wave equations with memory damping,  an extra frictional not delayed damping was needed (see also  \cite{JEE15, JEE18}).  In this paper, we focus then on wave-type equations with viscoelastic damping, delay feedback, and source term,  obtaining well-posedness and stability results for small initial data without adding any extra frictional not delayed dampings.  Moreover, concerning the time delay function, here we work in a very general setting. Indeed, in \cite{KP}, in addition to \eqref{tau_bounded}, it is assumed that the time delay function belongs to
$W^{1,\infty}(0, +\infty)$ with 
$\tau^\prime (t)\le c<1.$ 
These are the standard assumptions used to deal with wave-type equations with  a time dependent time delay (see e.g. \cite{NPV11, ChentoufMansouri, Feng}).

 Here, instead, we only assume that the time delay is a continuous function bounded from above. 
Therefore, our results improve previous related literature in a significant way.

Other models with memory damping and time delay effects have been studied in the recent literature. The first result is due to \cite{KSH}, in the linear setting. In that paper, a standard frictional damping, not delayed, is included in the model to compensate for the destabilizing effect of the delay feedback. As later understood, the viscoelastic damping alone can counter the destabilizing delay effect, under suitable assumptions, without the need of any artificial extra dampings. This has been shown, e.g., in \cite{ANP, Dai, Guesmia, Yang}. The case of intermittent delay feedback has been studied in \cite{P} while the paper \cite{MustafaKafini}
  analyzes a plate equation with memory, source term, delay feedback and, in the same spirit of \cite{KSH}, an extra not delayed frictional damping. Models for wave-type equations with memory damping have been previously studied by several authors in the undelayed case (see e.g.  \cite{Alabau, ACS, CCM}). See also  \cite{A} for results on the Timoshenko model, also in the undelayed case, and extensions to the time delay framework (see e.g. \cite{SS, AM}).

More rich is the literature in the case of frictional/structural damping, instead of a memory term, which compensates for the destabilizing effect of time delays and, for specific models, various stability results have been quite recently obtained under suitable assumptions (see e.g. \cite{AABM, AG, AM, C, CFP, NP, OO, KPFirst, Capistrano, Pignotti24,  XYL}).

The rest of the paper is organized as follows. In Section \ref{Preli}, we detail our functional setting and prove  some preliminary results; in particular, we rewrite system \eqref{modello} in an abstract form. In Section 3, we prove the exponential decay of the energy associated to \eqref{modello}. Finally, in Section 4 some examples are illustrated.

\section{Preliminaries}\label{Preli}

\setcounter{equation}{0}

Here, we precise the functional setting for our analysis and prove some preliminary results.

On the nonlinear term, as in \cite{ACS, PP_memory}, we assume that $\psi : D(A^{\frac 1 2})\rightarrow \RR$ is a functional having G\^{a}teaux derivative $D\psi(u)$ at every $u\in D(A^{\frac 12}).$ Moreover, we assume the following hypotheses:
\begin{itemize}
\item[{(H1)}] For every $u\in D(A^{\frac 1 2})$, there exists a constant $c(u)>0$ such that
$$
|D\psi(u)(v)|\leq c(u) ||v||_{{H}} \qquad \forall v\in {D}(A^{\frac 1 2}).
$$
Then, $\psi$ can be extended to the whole  $H$ and we denote by $\nabla \psi(u)$ the unique vector representing $D\psi(u)$ in the Riesz isomorphism, i.e.
$$
\langle \nabla \psi(u), v \rangle_H =D\psi(u) (v), \qquad \forall v\in H;
$$
\item[ (H2)] for all $r>0$ there exists a constant $L(r)>0$ such that
$$
||\nabla \psi (u)-\nabla \psi (v)||_H \leq L(r) ||A^{\frac 12}(u-v)||_H,
$$
for all $u,v\in {D}(A^{\frac 12})$ satisfying $||A^{\frac 12} u||_H\leq r$ and $||A^{\frac 12} v||_H\leq r$.
\item[{ (H3)}] $\psi(0)=0,$  $\nabla \psi(0)=0$ and
there exists a strictly increasing continuous function $h$ such that
\begin{equation}
\label{stima_h}
||\nabla \psi (u)||_H\leq h(||A^{\frac 12} u||_H)||A^{\frac 12}u||_H,
\end{equation}
for all $u\in {D}(A^{\frac 12})$.
\end{itemize}

As in Dafermos \cite{Dafermos}, we define the function
\begin{equation}
\label{eta}
\eta^t(s):=u(t)-u(t-s),\quad s,t\in (0,+\infty),
\end{equation}
so that we can rewrite \eqref{modello} in the following way:
\begin{equation}
\label{modelloDafermos}
\begin{array}{l}
\displaystyle{u_{tt}(t)+(1-\tilde{\beta})Au(t)+\int_0^{+\infty} \beta (s)A\eta^t(s)ds+k(t)BB^*u_t(t-\tau(t))}\\
\displaystyle{\hspace{8 cm}
=\nabla \psi(u(t)),\ t\in(0,+\infty),}\\
\displaystyle{ \eta^t_t(s)=-\eta^t_s(s)+u_t(t),\ \quad t, s\in  (0, +\infty),}\\
%%\displaystyle{ u(x,t)=0 \quad \text{in} \quad \partial \Omega \times (0,+\infty),}\\
%%\displaystyle{ \eta^t(x,s)=0 \quad \text{in} \quad \partial\Omega \times (0,+\infty),}\\
\displaystyle{ u(0)=u_0(0),}\\
\displaystyle{u_t(t)= g(t), \quad t\in [-\bar\tau, 0],}\\
\displaystyle{ \eta^0(s)=\eta_0(s)=u_0(0)-u_0(-s)  \quad s\in (0,+\infty).}
\end{array}
\end{equation}
Let us define the energy of the model \eqref{modello} (equivalently \eqref{modelloDafermos}) as
\begin{equation}
\label{energia1}
\begin{array}{l}
\vspace{0.3cm}\displaystyle{E(t):=E (u(t))=\frac{1}{2}||u_t(t)||_H^2+\frac{1-\tilde{\beta}}{2}||A^{\frac 12}u(t)||^2_H-\psi(u)  }\\ \hspace{2 cm}
\displaystyle{ +\frac{1}{2}\int_0^{+\infty} \beta(s) ||A^{\frac 1 2} \eta^t(s)||^2_H ds +\frac{1}{2}\int_{t-\bar\tau}^t |k(s)|\cdot ||B^*u_t(s)||_H^2 ds.}
 \end{array}
\end{equation}
Note that, apart from the last term, this is the natural energy for nonlinear wave-type equations with memory (cf. e.g. \cite{ACS}). The additional term
$$\frac 12\int_{t-\bar\tau}^t\vert k(s)\vert\cdot\Vert B^* u_t(s)\Vert_H^2\, ds$$
 is crucial in order to deal with the delay feedback in the case of  time varying time delay  (cf. \cite{KP, PP_memory} for similar terms).

Moreover, let us define the functional 

$$\mathcal{E}(t):=\max\left\{\frac{1}{2}\max_{s\in [-\bar{\tau},0]}\Vert g(s)\Vert_H^2,\,\,\max_{s\in[0,t]}E(s)\right\}.$$ 
In particular, for $t=0$, 

$$\mathcal{E}(0):=\max\left\{\frac{1}{2}\max_{s\in [-\bar{\tau},0]}\Vert g(s)\Vert_H^2,\,\,E(0)\right\}.$$ 

We will show that, thanks to the assumption  \eqref{stima_h}, for solutions to system \eqref{modello} corresponding to sufficiently {\em small} initial data, the energy is positive for any $t\ge 0.$ Moreover, we will prove that an exponential decay estimate holds.

First of all, we will reformulate \eqref{modelloDafermos} (see \eqref{forma_astratta2}) as an abstract first order equation. 
Let us denote
\begin{equation}\label{b}
\Vert B\Vert_{\mathcal{L}(H)}=\Vert B^*\Vert_{\mathcal{L}(H)}=b.
\end{equation}
Our result will be obtained under an assumption on the coefficient $k(t)$ of the delay feedback.
More precisely, we assume (cf. \cite{KP}) that
there exist two constants $\omega '\in [0,\omega)$ and $\gamma\in \RR$ such that
\begin{equation}
\label{ipotesi2}
b^2 Me^{\omega\bar\tau} \int_0^t  |k(s)| ds \leq \gamma +\omega 't, \quad \mbox{\rm for all}\ t\ge 0.
\end{equation}
Note that \eqref{ipotesi2} includes, as particular cases,  $k$ integrable or $k$ in $L^\infty$ with $\Vert k\Vert_\infty$ sufficiently small.

Let $L^2_\beta ((0,+\infty); D(A^{\frac 12}))$ be the Hilbert space of the $D(A^{\frac 12})-$valued functions in $(0,+\infty)$ endowed with the scalar product

$$\langle \varphi, \psi \rangle_{L^2_\beta ((0,+\infty); D(A^{\frac 12}))}=\int_0^\infty \beta (s)\langle A^{\frac 12}\varphi, A^{\frac 12}\psi \rangle_H ds$$
and denote by ${\mathcal H }$ the Hilbert space
$$
\mathcal{H}=D(A^{\frac 12})\times H\times  L^2_{\beta} ((0,+\infty);D(A^{\frac 12})),
$$
equipped with the inner product

\begin{equation}\label{inner}
\left\langle
\left (
\begin{array}{l}
u\\
v\\
w
\end{array}
\right ),
\left (
\begin{array}{l}
\tilde u\\
\tilde v\\
\tilde w
\end{array}
\right )
\right\rangle_{\mathcal{H}}:= (1-\tilde\beta ) \langle A^{\frac 12}u, A^{\frac 12} \tilde u\rangle_H+\langle v, \tilde v\rangle_H+\int_0^\infty \beta (s)\langle  A^{\frac 12}w,  A^{\frac 12} \tilde w\rangle_H ds.
\end{equation}
Setting $U= (u, u_t, \eta^t)$, we can restate \eqref{modello} in the abstract form
\begin{equation}
\label{forma_astratta2}
\begin{array}{l}
\displaystyle{ U'(t)= \mathcal{A} U(t)-k(t)\mathcal{B}U(t-\tau(t))+F(U(t)),}\\
\displaystyle{ U(s)=\tilde g(s), \quad s\in [-\bar\tau, 0],}
\end{array}
\end{equation}
where the operator ${\mathcal A}$  is defined by
\begin{equation*}
\mathcal{A} \begin{pmatrix}
u\\
v\\
w
\end{pmatrix}
=
\begin{pmatrix}
v\\
-(1-\tilde{\beta})Au-\int_0^{+\infty} \beta(s) A w(s) ds \\
-w_s +v
\end{pmatrix}
\end{equation*}
with domain
\begin{equation}
\begin{array}{c}
\displaystyle{ {D}(\mathcal{A})= \{ (u,v,w) \in D(A^{\frac 12})\times D(A^{\frac 12})\times L_{\beta}^2 ((0,+\infty); D(A^{\frac 12})):}\hspace{1,5 cm}\\
\hspace{1.5 cm}
\displaystyle{(1-\tilde{\beta})u+\int_0^{+\infty} \beta (s) w(s)ds \in D(A), \quad w_s \in L^2_{\beta} ((0,+\infty); D(A^{\frac 12}))\},}
\end{array}
\end{equation}
in the Hilbert space ${\mathcal H},$ and the operator ${\mathcal B}:{\mathcal H}\rightarrow {\mathcal H}$ is defined by
$${\mathcal B}\left (
\begin{array}{l}
u\\
v\\
w
\end{array}
\right ):= \left(
\begin{array}{l}
0\\
 BB^* v\\
 0
\end{array}
\right ).$$
Note that, by \eqref{b}, it turns out that $\Vert {\mathcal B}\Vert_{\mathcal{L(H)}} =b^2.$
Moreover,  $\tilde g(s)=(u_0(0), g(s), \eta_0)$ for $s\in [-\bar\tau, 0]$, and we denote $U_0:=\tilde{g}(0)=(u_0(0), u_1, \eta_0)$. Also, $F(U):= (0, \nabla\psi(u), 0)^T.$ From (H2) and (H3) we deduce that
the function $F$ satisfies:
\begin{itemize}
\item[{(F1)}] $F(0)=0$;
\item[{(F2)}] for each $r>0$ there exists a constant $L(r)>0$ such that
\begin{equation}\label{stimaF}
||F(U)-F(V)||_{\mathcal{H}}\leq L(r)||U-V||_{\mathcal{H}}
\end{equation}
whenever $||U||_{\mathcal{H}}\leq r$ and $||V||_{\mathcal{H}}\leq r$.
\end{itemize}
It is well-known (see e.g. \cite{Giorgi}) that the operator ${\mathcal A}$
in the problem's formulation \eqref{forma_astratta2}, corresponding to the linear undelayed part of the model, generates an exponentially stable semigroup
$\{S(t)\}_{t\geq0},$ namely there exist two constants
$M,\omega >0$ such that
\begin{equation}
	\label{semigruppo}
	||S(t)||_{\mathcal{L}(\mathcal{H})}\leq Me^{-\omega t}.
\end{equation}
First of all, we can prove a local well-posedness result.

\begin{Theorem}
	\label{lemma1senzabound}
	Let us consider the system \eqref{forma_astratta2} with initial datum $\tilde g\in C([-\bar{\tau},0]; \mathcal{H}).$ Then, there exists a unique local solution $U(\cdot)$ defined on a time  interval $[0,\delta)$. 
\end{Theorem}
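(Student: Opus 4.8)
The plan is to recast the abstract problem \eqref{forma_astratta2} as the integral (mild) equation
\[
U(t) = S(t)U_0 + \int_0^t S(t-\sigma)\bigl[-k(\sigma)\,\mathcal{B}\,U(\sigma-\tau(\sigma)) + F(U(\sigma))\bigr]\,d\sigma,\qquad t\in[0,\delta],
\]
coupled with $U(s)=\tilde g(s)$ on $[-\bar\tau,0]$, and to solve it by a contraction argument. Because $\tau$ is only assumed continuous and may vanish, the delayed argument $\sigma-\tau(\sigma)$ can coincide with $\sigma$, so the usual step-by-step scheme (in which the delayed term depends only on already-constructed data) is unavailable; I therefore treat the delayed term as part of the unknown in a single fixed-point problem posed on the whole interval $[-\bar\tau,\delta]$. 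Set $G:=\max_{s\in[-\bar\tau,0]}\Vert\tilde g(s)\Vert_{\mathcal H}$ and, recalling from \eqref{semigruppo} that $\Vert S(t)\Vert_{\mathcal L(\mathcal H)}\le M$ for $t\ge0$, put $R:=\max\{M\Vert U_0\Vert_{\mathcal H},\,G\}+1$. On the complete metric space
\[
Y:=\bigl\{U\in C([-\bar\tau,\delta];\mathcal H):\ U|_{[-\bar\tau,0]}=\tilde g,\ \Vert U(t)\Vert_{\mathcal H}\le R\ \ \forall\, t\in[0,\delta]\bigr\},
\]
equipped with the sup-distance, I define $\Phi U$ to equal $\tilde g$ on $[-\bar\tau,0]$ and the right-hand side of the integral equation on $[0,\delta]$.

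First I would check that $\Phi$ is well defined and maps $Y$ into itself for $\delta$ small. Continuity of $\Phi U$ on $[0,\delta]$ follows from strong continuity of $S(\cdot)$ together with the fact that $\sigma\mapsto U(\sigma-\tau(\sigma))$ is continuous (composition of the continuous map $\sigma\mapsto\sigma-\tau(\sigma)$, valued in $[-\bar\tau,\delta]$, with the continuous $U$) and bounded, while $k\in L^1_{loc}$ and $F(U(\cdot))$ is continuous, so the integrand lies in $L^1(0,\delta;\mathcal H)$; since $\Phi U(0)=S(0)U_0=U_0=\tilde g(0)$, $\Phi U$ matches continuously at the junction. For the self-mapping bound I use $\Vert\mathcal B\Vert_{\mathcal L(\mathcal H)}=b^2$, the fact that $\Vert U(\sigma-\tau(\sigma))\Vert_{\mathcal H}\le R$ (as both $\tilde g$ and $U$ are bounded by $R$), and, from (F1)--(F2) with $r=R$, $\Vert F(U(\sigma))\Vert_{\mathcal H}=\Vert F(U(\sigma))-F(0)\Vert_{\mathcal H}\le L(R)R$, to obtain
\[
\Vert\Phi U(t)\Vert_{\mathcal H}\le M\Vert U_0\Vert_{\mathcal H} + M\Bigl[b^2 R\int_0^\delta|k(\sigma)|\,d\sigma + L(R)R\,\delta\Bigr].
\]
Since $R-M\Vert U_0\Vert_{\mathcal H}\ge 1$ and $\int_0^\delta|k(\sigma)|\,d\sigma\to0$ as $\delta\to0$ by absolute continuity of the integral of $|k|\in L^1_{loc}$, the bracket can be made $\le1$ by taking $\delta$ small, giving $\Vert\Phi U(t)\Vert_{\mathcal H}\le R$.

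The contraction estimate is the crux, and it is where the time-dependent delay is handled. For $U,V\in Y$ the term $S(t)U_0$ cancels, so
\[
\Vert\Phi U(t)-\Phi V(t)\Vert_{\mathcal H}\le M\int_0^t\bigl[b^2|k(\sigma)|\,\Vert U(\sigma-\tau(\sigma))-V(\sigma-\tau(\sigma))\Vert_{\mathcal H}+L(R)\Vert U(\sigma)-V(\sigma)\Vert_{\mathcal H}\bigr]\,d\sigma.
\]
The key observation is that whenever $\sigma-\tau(\sigma)<0$ both $U$ and $V$ equal $\tilde g$ there, so that contribution vanishes, while whenever $\sigma-\tau(\sigma)\in[0,\delta]$ one has $\Vert U(\sigma-\tau(\sigma))-V(\sigma-\tau(\sigma))\Vert_{\mathcal H}\le\sup_{[0,\delta]}\Vert U-V\Vert_{\mathcal H}$; in every case the delayed difference is controlled by $\Vert U-V\Vert_{C([0,\delta];\mathcal H)}$. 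Taking the supremum over $t\in[0,\delta]$ yields
\[
\sup_{[0,\delta]}\Vert\Phi U-\Phi V\Vert_{\mathcal H}\le M\Bigl[b^2\int_0^\delta|k(\sigma)|\,d\sigma+L(R)\delta\Bigr]\sup_{[0,\delta]}\Vert U-V\Vert_{\mathcal H},
\]
and the same smallness argument renders the factor strictly less than $1$. Banach's fixed-point theorem then produces a unique $U\in Y$ solving the integral equation on $[0,\delta]$, i.e. a local mild solution. Uniqueness among all mild solutions follows by the same estimate: for two solutions $U_1,U_2$, bounded by some $r$ on the interval, set $\phi(t):=\sup_{[0,t]}\Vert U_1-U_2\Vert_{\mathcal H}$, bound the delayed difference by $\phi(\sigma)$ exactly as above, and apply a generalized Gronwall inequality with the $L^1_{loc}$ weight $M(b^2|k|+L(r))$ to conclude $\phi\equiv0$. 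The main obstacle is precisely this delayed-term handling: one must verify that, even when $\tau$ degenerates, the delayed values either coincide with the fixed initial data or are dominated by the current sup-norm, so that the contraction closes with no step-by-step structure.
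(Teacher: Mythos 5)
Your argument is essentially the paper's own proof: the same fixed-point map on the space of continuous functions on $[-\bar\tau,\delta]$ agreeing with $\tilde g$ on $[-\bar\tau,0]$ and uniformly bounded by a fixed constant, the same observation that the delayed difference vanishes on $[-\bar\tau,0]$ and is otherwise dominated by the sup-norm on $[0,\delta]$ (so no step-by-step structure is needed), and the same Gronwall argument for uniqueness among all mild solutions. The only slip is in the self-mapping bound, where making the bracket $\le 1$ yields $\Vert\Phi U(t)\Vert_{\mathcal H}\le M\Vert U_0\Vert_{\mathcal H}+M$, which need not be $\le R$ when $M>1$; requiring the bracket to be $\le 1/M$ (equally attainable for small $\delta$, and exactly what the paper's condition \eqref{smalltime} encodes) fixes this.
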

\begin{proof}
	Let $\tilde g\in C([-\bar{\tau},0]; \mathcal{H})$. We set
	$$C:=\max\left\{2M\max_{s\in[-\bar{\tau},0]}\lVert \tilde{g}(s)\rVert_{\mathcal{H}},\max_{s\in[-\bar{\tau},0]}\lVert \tilde{g}(s)\rVert_{\mathcal{H}}\right\}.$$ 
	Let $\xi >0$ be a sufficiently small time such that
	\begin{equation}\label{smalltime}
		\xi L(C)+b^2\lVert k\rVert_{\mathcal{L}^1([0,\xi];\RR)}<\frac{1}{4M},
	\end{equation}
with $L(C)$ as in \eqref{stimaF}.
	Let us denote 
	$$C_{\tilde{g}}([-\bar\tau,\xi];{\mathcal H}):=\{U\in C([-\bar\tau,\xi];{\mathcal H}):U(s)=\tilde{g}(s),\, \forall s\in [-\bar\tau,0]\}.$$ Note that $ C_{\tilde{g}}([-\bar\tau,\xi];{\mathcal H})$ is a nonempty and closed subset of $C([-\bar\tau,\xi];{\mathcal H})$.
	As a consequence, $( C_{\tilde{g}}([-\bar\tau,\xi];{\mathcal H}),\lVert \cdot\rVert_{C([-\bar\tau,\xi];{\mathcal H})})$ is a Banach space.
	Moreover, let us denote 

$$C^C_{\tilde{g}}([-\bar\tau,\xi];{\mathcal H}):=\{U\in C_{\tilde{g}}([-\bar\tau,\xi];{\mathcal H}):\lVert U(t)\rVert_{\mathcal{H}}\leq C,\, \forall t\in [-\bar\tau,\xi]\}.$$

	Let us note that $C^C_{\tilde{g}}([-\bar\tau,\xi];{\mathcal H})$ is nonempty since it suffices to take 
	$$U(s)=\begin{cases}
		U_0,\quad &s\in [0,\xi],\\
		\tilde{g}(s), \quad& s\in [-\bar{\tau},0),
	\end{cases}$$
to have that $U\in C_{\tilde{g}}([-\bar\tau,\xi];{\mathcal H})$ and $\lVert U(t)\rVert_{\mathcal{H}}\leq \lVert \tilde{g}\rVert_{C([-\bar\tau,\xi];{\mathcal H})}\leq C$, for all $t\in [-\bar\tau,\xi]$. So, $U$ belongs to $C^C_{\tilde{g}}([-\bar\tau,\xi];{\mathcal H})$. Also, it is easy to see that $C^C_{\tilde{g}}([-\bar\tau,\xi];{\mathcal H})$ is closed in $C([-\bar\tau,\xi];{\mathcal H})$. Hence, $( C^C_{\tilde{g}}([-\bar\tau,\xi];{\mathcal H}),\lVert \cdot\rVert_{C([-\bar\tau,\xi];{\mathcal H})})$ is a Banach space too.
\\Next, we define the map
	$\Gamma:C^C_{\tilde{g}}([-\bar\tau,\xi];{\mathcal H})\rightarrow C^C_{\tilde{g}}([-\bar\tau,\xi];{\mathcal H})$ given by
	$$\Gamma U(t)=\begin{cases}
		S(t)U_0+\int_{0}^t S(t-s)[F(U(s))+k(s)\mathcal{B}U(s-\tau(s))]\, ds,\quad &t\in (0,\xi],
		\\\tilde{g}(t),\quad &t\in [-\bar{\tau},0].
	\end{cases}$$
We claim that $\Gamma$ is well-defined. Indeed, let $U\in C^C_{\tilde{g}}([-\bar{\tau},\xi];H)$. Then, from the semigroup theory, $t\mapsto S(t)U_0$ is continuous. Also, since $U(\cdot)$ is continuous in $[-\bar{\tau},\xi]$, $\tau(\cdot)$ is a continuous function and ${\mathcal B}$ is a bounded linear operator from $H$ into itself, $[0,\xi]\ni t\mapsto {\mathcal B}U(t-\tau(t))$ is continuous. Moreover, $k\in {\mathcal L}^1([0,\xi];\RR )$. So $k(\cdot){\mathcal B}U(\cdot-\tau(\cdot))\in {\mathcal L}^1([0,\xi];\mathcal{H}  )$. Also, since $U(\cdot)$ is continuous in $[-\bar{\tau},\xi]$ and $F(\cdot)$ is locally Lipschitz continuous in $\mathcal{H}$ from $(F_2),$ the map $t\to F(U(t))$ is continuous in $[-\bar{\tau},\xi]$. So, $F((U(\cdot)))\in {\mathcal L}^1([0,\xi];\mathcal{H} )$. As a consequence, the map $t\mapsto \int_0^t S(t-s)[F(U(s))+ k(s){\mathcal B}U(s-\tau(s))]ds$ is continuous in $[0,\xi]$. Thus, $\Gamma U\in C([0,\xi];H)$. Furthermore, $\Gamma U=\tilde{g}$ in $[\bar{\tau},0]$. Finally, for all $t\in [-\bar{\tau},0]$, 
$$\lVert \Gamma U(t)\rVert_{\mathcal{H}}=\lVert \tilde{g}(t)\rVert_{\mathcal{H}}\leq C.$$
On the other hand, for all $t\in (0,\xi]$, from $(F_2)$ with $F(0)=0$ and $\lVert U\rVert_{C([-\bar\tau,\xi];{\mathcal H})}\leq C$, we can write
$$\begin{array}{l}
	\displaystyle{\lVert\Gamma U(t)\rVert_{\mathcal{H}}\leq Me^{-\omega t}\lVert U_0\rVert_{\mathcal{H}}+M\int_{0}^{t}e^{-\omega(t-s)}(\lVert F(U(s))\rVert_{\mathcal{H}}+b^2\lvert k(s)\rvert\lVert U(s-\tau(s))\rVert_{\mathcal{H}}) ds}\\
	\displaystyle{\hspace{1.8cm}\leq M\lVert U_0\rVert_{\mathcal{H}}+M\int_{0}^{t}(L(C)+b^2\lvert k(s)\rvert)(\lVert U(s)\rVert_{\mathcal{H}}+\lVert U(s-\tau(s))\rVert_{\mathcal{H}}) ds}\\
	\displaystyle{\hspace{1.8cm}\leq M\lVert U_0\rVert_{\mathcal{H}}+2MC(\xi L(C)+b^2\lVert k\rVert_{\mathcal{L}^1([0,\xi];\RR)}).}
\end{array}$$
Thus, using \eqref{smalltime}, by definition of $C$ we get
$$\begin{array}{l}
	\displaystyle{\lVert\Gamma U(t)\rVert_{\mathcal{H}}\leq \frac{C}{2}+2MC(\xi L(C)+b^2\lVert k\rVert_{\mathcal{L}^1([0,\xi];\RR)})\leq \frac{C}{2}+2MC\frac{1}{4M}=C.}
\end{array}$$
Thus, $$\lVert\Gamma U(t)\rVert_{\mathcal{H}}\leq C,\quad\forall t\in [-\bar{\tau},\xi].$$
So, we can conclude that $\Gamma$ is well defined.

Next, we claim that $\Gamma$ is a contraction. Indeed, let $U,V\in C^C_{\tilde{g}}([-\bar{\tau},\xi];H)$. Then, for all $t\in [-\bar{\tau},0]$,
$$\lVert \Gamma U(t)-\Gamma V(t)\rVert_{\mathcal{H}}=0.$$
On the other hand, for all $t\in (0,\xi]$, since $\lVert U\rVert_{C([-\bar\tau,\xi];{\mathcal H})},\lVert V\rVert_{C([-\bar\tau,\xi];{\mathcal H})}\leq C$, from $(F_2)$ it follows that
	$$\begin{array}{l}
		\vspace{0.3cm}\displaystyle{\lVert \Gamma U(t)-\Gamma V(t)\rVert_{\mathcal{H}}\leq \int_{0}^{t}\lVert S(t-s)\rVert_{\mathcal{L}(\mathcal{H})} \lVert F(U(s))-F(V(s))\rVert_{\mathcal{H}} ds}\\
		\vspace{0.3cm}\displaystyle{\hspace{2cm}+ \int_{0}^{t}\lVert S(t-s)\rVert_{\mathcal{L}(\mathcal{H})}\lvert k(s)\rvert \lVert \mathcal{B}U(s-\tau(s))-\mathcal{B}V(s-\tau(s))\rVert_{\mathcal{H}} ds}\\
		
		\displaystyle{\hspace{1.5cm}\leq M(L(C)\xi+b^2 \lVert k\rVert_{\mathcal{L}^1([0,\xi];\RR)})\lVert U-V\rVert_{C([-\bar{\tau},\xi];{\mathcal H})}}
	\end{array}$$
	Thus, 
	$$\lVert \Gamma U-\Gamma V\rVert_{C([-\bar{\tau},\xi];{\mathcal H})}\leq M(L(C)\xi+b^2\lVert k\rVert_{\mathcal{L}^1([0,\xi];\RR)})\lVert U-V\rVert_{C([-\bar{\tau},\xi];{\mathcal H})}.$$
	As a consequence, since from \eqref{smalltime} $M(L(C)\xi+b^2\lVert k\rVert_{\mathcal{L}^1([0,T];\RR)})<\frac{1}{4}<1,$ the map $\Gamma$ is a contraction. Thus, from the Banach's Theorem, $\Gamma$ has a unique fixed point $U\in C^C_{\tilde{g}}([-\bar{\tau},\xi];{\mathcal H})$. So, the fixed point $U\in C^C_{\tilde{g}}([-\bar{\tau},\xi];{\mathcal H})$ is a local solution to \eqref{modello2} that can be extended to some maximal interval $[0,\delta)$ since $\lVert U\rVert_{C([0,\xi];\mathcal{H})}\leq C$. 
	
Now, we prove that the fixed point $U$ is the unique local mild solution to \eqref{forma_astratta2}. Indeed, assume that \eqref{forma_astratta2} has another local mild solution $V$ defined in a time interval $[0,\delta')$. Let $t_0>0$ be such that both $U$ and $V$ are defined in the time interval $[0,t_0]$. We denote with $c:=\max\{\lVert U\rVert_{C([-\bar\tau,t_0];{\mathcal H})},\lVert V\rVert_{C([-\bar\tau,t_0];{\mathcal H})}\}$. Then, for every $t\in [0,t_0]$, we have that
	$$\begin{array}{l}
		\displaystyle{\lVert U(t)-V(t)\rVert_{\mathcal{H}}\leq M\int_{0}^{t}b^2\lvert k(s)\rvert \lVert U(s-\tau(s))-V(s-\tau(s))\rVert_{\mathcal{H}}ds}\\
		\displaystyle{\hspace{3cm}+ML(c)\int_{0}^{t}\lVert U(s)-V(s)\rVert_{\mathcal{H}}ds}\\
		\displaystyle{\hspace{2cm}\leq M\int_{0}^{t}(b^2\lvert k(s)\rvert+L(c))\max_{r\in[s-\bar{\tau},s]}\lVert U(r)-V(r)\rVert_{\mathcal{H}}ds,}
	\end{array}$$
from which 
$$\begin{array}{l}
	\displaystyle{\max_{r\in[t-\bar{\tau},t]}\lVert U(r)-V(r)\rVert_{\mathcal{H}}\leq M\int_{0}^{t}(b^2\lvert k(s)\rvert+L(c))\max_{r\in[s-\bar{\tau},s]}\lVert U(r)-V(r)\rVert_{\mathcal{H}}ds.}
\end{array}$$
Thus, the Gronwall's estimate yields
$$\max_{r\in[t-\bar{\tau},t]}\lVert U(r)-V(r)\rVert_{\mathcal{H}}\leq 0,$$
and $$\lVert U(t)-V(t)\rVert_{\mathcal{H}}=0,\quad \forall t\in [0,t_0].$$
So, $U$ and $V$ coincide on every closed interval $[0,t_0]$ in which they both exist. Then, $\delta=\delta'$ and $U$ is the unique local mild solution to \eqref{modello2}.
\end{proof}
\begin{Remark}
	Assume that the time delay function $\tau(\cdot)$ is bounded from below by a positive constant, namely 
	\begin{equation}\label{lowerbound}
		\tau(t)\geq \tau_0,\quad\forall t\geq 0,
	\end{equation}
	for some $\tau_0>0$. In this case, Theorem \ref{lemma1senzabound} can be proved in a simpler way. Indeed, in $[0,\tau_0]$, we can rewrite the abstract system \eqref{forma_astratta2} as an undelayed problem:
	\begin{eqnarray*}
		U'(t)&=& \mathcal{A}U(t)-k(t)\mathcal{B}\tilde{g}(t-\tau(t))+F(U(t)), \quad t\in (0, \tau_0),\\
		U(0)&=&U_0.
	\end{eqnarray*}
	Then, we can apply the classical theory of nonlinear semigroups (see e.g. \cite{Pazy, PataZucchi}) obtaining the existence of a unique solution  on a set $[0,\delta)$, with $\delta \le\tau_0$.
\end{Remark}
In order to prove our global well-posedness and stability result, we need some preliminary estimates.
\begin{Lemma}\label{stimaE}
	Let $u:[0,T)\rightarrow \RR$ be a solution of \eqref{modello}.
	Assume that
	\begin{equation} \label{e(t)}
		E(t)\geq \frac{1}{4}||u_t(t)||_H^2,\quad \forall t\geq 0.
	\end{equation}
	Then,
	\begin{equation}
		\label{disuguaglianza energia}
		E(t)\leq \bar{C}(t)\mathcal{E}(0),\quad \forall t\geq 0,
	\end{equation}
	where
	\begin{equation}\label{Cbar}
		\bar{C}(t)=e^{3b^2\int_0^t|k(s)|  ds}.
	\end{equation}
\end{Lemma}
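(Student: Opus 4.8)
The plan is to obtain a pointwise differential inequality for the energy $E$ in \eqref{energia1} and then close it with a Gronwall argument against the monotone functional $\mathcal E$. First I would differentiate $E(t)$ along the solution using the Dafermos system \eqref{modelloDafermos}: I test the first equation with $u_t(t)$, differentiate the elastic and memory terms, integrate by parts in $s$ in the term $\int_0^{+\infty}\beta(s)\langle A^{\frac12}\eta^t(s),A^{\frac12}\eta^t_s(s)\rangle_H\,ds$ (the boundary contribution at $s=0$ vanishes since $\eta^t(0)=0$), and differentiate the reservoir term by the Leibniz rule. The conservative contributions $(1-\tilde\beta)\langle Au,u_t\rangle_H$, the memory coupling $\langle\int_0^{+\infty}\beta(s)A\eta^t(s)\,ds,u_t\rangle_H$ and $\langle\nabla\psi(u),u_t\rangle_H$ all cancel, leaving the identity
\begin{equation*}
E'(t)=-k(t)\langle B^*u_t(t-\tau(t)),B^*u_t(t)\rangle_H+\frac12\int_0^{+\infty}\beta'(s)\|A^{\frac12}\eta^t(s)\|_H^2\,ds+\frac{|k(t)|}{2}\|B^*u_t(t)\|_H^2-\frac{|k(t-\bar\tau)|}{2}\|B^*u_t(t-\bar\tau)\|_H^2 .
\end{equation*}

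Next I would discard the two nonpositive terms: the memory term is $\le 0$ since $\beta'\le-\delta\beta\le 0$ by hypothesis (iv), and the last boundary term is manifestly $\le 0$. The only sign-indefinite contribution is the delay cross term, which I estimate by Cauchy--Schwarz and Young's inequality and then convert from $B^*$ to $b$ through \eqref{b}, so that everything is expressed through $\|u_t(t)\|_H^2$ and $\|u_t(t-\tau(t))\|_H^2$. Here the hypothesis \eqref{e(t)} enters decisively: it lets me replace $\|u_t(t)\|_H^2$ by a multiple of $E(t)\le\mathcal E(t)$. For the delayed velocity I distinguish the two cases $t-\tau(t)\ge 0$, controlled again by \eqref{e(t)} together with the definition of $\mathcal E$, and $t-\tau(t)<0$, for which $u_t=g$ and $\|g\|_H^2$ is controlled by the first entry of $\mathcal E$; in either case $\|u_t(t-\tau(t))\|_H^2$ is bounded by a fixed multiple of $\mathcal E(t)$. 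Collecting these bounds and optimising the Young parameter yields a differential inequality of the form $E'(t)\le 3b^2|k(t)|\,\mathcal E(t)$, with the constant matching the one in \eqref{Cbar}.

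Finally I would integrate. Since $\mathcal E$ is nondecreasing and $\mathcal E(t)\ge E(t)$, at any point where $\mathcal E(t)>E(t)$ it is locally constant, so its upper Dini derivative vanishes, while at points where $\mathcal E(t)=E(t)$ its derivative coincides with $E'(t)$; in both situations $D^+\mathcal E(t)\le 3b^2|k(t)|\,\mathcal E(t)$. Gronwall's lemma then gives $\mathcal E(t)\le\mathcal E(0)\,e^{3b^2\int_0^t|k(s)|\,ds}=\bar C(t)\mathcal E(0)$, and since $E(t)\le\mathcal E(t)$ the claim \eqref{disuguaglianza energia} follows.

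The main obstacle is precisely the delay cross term under a time-varying delay. In the constant-delay case the reservoir term can be tuned so that its negative boundary contribution at $t-\bar\tau$ exactly cancels the delayed velocity produced by the cross term; here $t-\tau(t)$ roams over the whole interval $[t-\bar\tau,t]$ and no such pointwise cancellation is available. This forces the a priori control of $\|u_t(t-\tau(t))\|_H$ by the supremum-type functional $\mathcal E$ rather than by a single boundary term, which is exactly why $\mathcal E$ is built into the statement. A minor technical point is the justification of the energy identity for mild solutions, which I would handle by a standard density/regularisation argument (or by working directly with the integrated form of the identity).
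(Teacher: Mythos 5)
Your proposal is correct and follows essentially the same route as the paper's proof: the same energy identity (with the conservative terms cancelling), discarding the same two nonpositive terms via hypothesis (iv), Young's inequality on the delay cross term, control of the delayed velocity through the supremum functional $\mathcal E$ with the same case distinction between $t-\tau(t)\ge 0$ and $t-\tau(t)<0$, and a Gronwall argument on $\mathcal E$ rather than on $E$ itself. The only point to watch is the constant bookkeeping: hypothesis \eqref{e(t)} literally gives $\|u_t(s)\|_H^2\le 4E(s)$, whereas the factor $3b^2$ in \eqref{Cbar} comes from using $\|u_t(s)\|_H^2\le 2E(s)$, an issue your sketch inherits from (and shares with) the paper's own computation.
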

\begin{proof}
	Differentiating the energy, we obtain
	$$
	\begin{array}{l}
		\vspace{0.3cm}
		\displaystyle{\frac{dE(t)}{dt}=\langle u_t(t),u_{tt}(t)\rangle_H+(1-\tilde{\beta})\langle A^{\frac 1 2}u(t),A^{\frac 1 2} u_t(t) \rangle_H -\langle \nabla \psi (u(t)), u_t(t)\rangle_H  }\\
		\vspace{0.3cm}
		\displaystyle{\hspace{1.8 cm}+\frac{1}{2} |k(t)|\cdot ||B^*u_t(t)||_H^2-\frac{1}{2}|k(t-\bar{\tau})|\cdot ||B^*u_t(t-\bar\tau)||_H^2}\\
		\displaystyle{\hspace{1.8 cm}+\int_0^{+\infty} \beta (s) \langle A^{\frac 12}\eta^t(s),A^{\frac 1 2}\eta^t_t(s) \rangle_H ds.}
	\end{array}
	$$
	Then, since from \eqref{modelloDafermos} it holds that
	$$u_{tt}(t)=\nabla\psi(u(t))-(1-\tilde{\beta})Au(t)-\int_{0}^{+\infty}\beta(s)A\eta^t(s)ds-k(t)BB^*u_t(t-\tau(t)),$$
	we get
	$$
	\begin{array}{l}
		\vspace{0.3cm}\displaystyle{\frac{dE(t)}{dt}=\langle u_t(t),\nabla\psi(u(t))\rangle_H-(1-\tilde{\beta})\langle u_t(t),Au(t)\rangle_H-\int_{0}^{+\infty}\beta(s)\langle u_t(t),A\eta^t(s)\rangle_Hds}\\
		\vspace{0.3cm}\displaystyle{\hspace{2 cm}-k(t)\langle u_t(t),BB^*u_t(t-\tau(t))\rangle_H+(1-\tilde{\beta})\langle A^{\frac 1 2}u(t),A^{\frac 1 2} u_t(t) \rangle_H -\langle \nabla \psi (u(t)), u_t(t)\rangle_H }\\
		\vspace{0.3cm}\displaystyle{\hspace{2cm}+\frac{1}{2} |k(t)|\cdot ||B^*u_t(t)||_H^2-\frac{1}{2}|k(t-\bar{\tau})|\cdot ||B^*u_t(t-\bar\tau)||_H^2}\\
		\displaystyle{\hspace{2cm}+\int_0^{+\infty} \beta (s) \langle A^{\frac 12}\eta^t(s),A^{\frac 1 2}\eta^t_t(s) \rangle_H ds.}
	\end{array}
	$$
	Let us note that, being $A$ a self-adjoint positive operator, also $A^{\frac{1}{2}}$ is self-adjoint. This together with the second inequality in \eqref{modelloDafermos}, i.e. $\eta_t^t=-\eta_s^t+u_t$, yields
		$$
	\begin{array}{l}
		\vspace{0.3cm}\displaystyle{\frac{dE(t)}{dt}=-\int_{0}^{+\infty}\beta(s)\langle u_t(t),A\eta^t(s)\rangle_Hds-k(t)\langle u_t(t),BB^*u_t(t-\tau(t))\rangle_H}\\
		\vspace{0.3cm}\displaystyle{\hspace{2cm}+\frac{1}{2} |k(t)|\cdot ||B^*u_t(t)||_H^2-\frac{1}{2}|k(t-\bar{\tau})|\cdot ||B^*u_t(t-\bar\tau)||_H^2}\\
		\vspace{0.3cm}\displaystyle{\hspace{2cm}+\int_0^{+\infty} \beta (s) \langle A\eta^t(s),\eta^t_t(s) \rangle_H ds}\\
		\vspace{0.3cm}\displaystyle{\hspace{1.5cm}=-k(t)\langle u_t(t),BB^*u_t(t-\tau(t))\rangle_H+\frac{1}{2} |k(t)|\cdot ||B^*u_t(t)||_H^2}\\
		\displaystyle{\hspace{2cm}-\frac{1}{2}|k(t-\bar{\tau})|\cdot ||B^*u_t(t-\bar\tau)||_H^2-\int_0^{+\infty} \beta (s) \langle A\eta^t(s),\eta^t_s(s) \rangle_H ds.}
	\end{array}
	$$
	Now, we claim that
	\begin{equation}\label{nonnegative}
		\int_0^{+\infty} \beta(s) \langle \eta ^t_s, A\eta^t(s) \rangle_H ds \ge 0.
	\end{equation}
	Indeed, since $A^{\frac{1}{2}}$ is self-adjoint, we can write
	$$\frac{1}{2}\frac{d}{ds}||A^{\frac{1}{2}} \eta^t(s)||_H^2=\langle A^{\frac{1}{2}}\eta ^t_s, A^{\frac{1}{2}}\eta^t(s) \rangle_H=\langle \eta ^t_s, A\eta^t(s) \rangle_H.$$
	Thus, since $\eta^t(0)=0$ and $\beta(t)||A^{\frac{1}{2}} \eta^t(s)||_H^2\to 0$, as $t\to +\infty$, (see \cite{Pata} for details) it comes that
	$$
	\int_0^{+\infty} \beta(s) \langle \eta ^t_s, A\eta^t(s) \rangle_H ds = -\frac{1}{2}\int_0^{+\infty} \beta'(s) ||A^{\frac{1}{2}} \eta^t(s)||_H^2 ds.$$
	Finally, using again (iv) on the memory kernel $\beta(\cdot)$, we can say that
	$$
	\begin{array}{l}
		\vspace{0.3cm}\displaystyle{\int_0^{+\infty} \beta(s) \langle \eta ^t_s, A\eta^t(s) \rangle_H ds = -\frac{1}{2}\int_0^{+\infty} \beta'(s) ||A^{\frac{1}{2}} \eta^t(s)||_H^2 ds}\\
		\displaystyle{\hspace{2cm}\geq \frac{\delta}{2} \int_0^{+\infty} \beta(s) ||A^{\frac{1}{2}} \eta^t(s)||_H^2 ds\geq 0,}
	\end{array},$$
	which proves \eqref{nonnegative}.
	\\Next, from \eqref{nonnegative}, we can estimate the derivative of the energy in the following way:
	$$
	\begin{array}{l}
		\vspace{0.3cm}\displaystyle{\frac{dE(t)}{dt} \leq -k(t)\langle u_t(t),BB^*u_t(t-\tau(t))\rangle_H +\frac{1}{2}|k(t)| \cdot ||B^*u_t(t)||_H^2-\frac{1}{2}|k(t-\bar{\tau})|\cdot ||B^*u_t(t-\bar\tau)||_H^2}\\
		\displaystyle{\hspace{1.1cm}\leq -k(t)\langle u_t(t),BB^*u_t(t-\tau(t))\rangle_H +\frac{1}{2}|k(t)| \cdot ||B^*u_t(t)||_H^2.}
	\end{array}
	$$
	Therefore, using the definition of adjoint and Young inequality, we get
	$$
	\begin{array}{l}
		\vspace{0.3cm}\displaystyle{
			\frac{dE(t)}{dt}\leq-k(t)\langle B^*u_t(t),B^*u_t(t-\tau(t))\rangle_H +\frac{1}{2}|k(t)| \cdot ||B^*u_t(t)||_H^2}\\
		\vspace{0.3cm}\displaystyle{\hspace{2 cm}\leq \frac{1}{2}|k(t)\vert\cdot||B^*u_t(t)||_H^2 +\frac{1}{2}|k(t)\vert\cdot||B^*u_t(t-\tau(t))||_H^2+\frac{1}{2}|k(t)\vert\cdot||B^*u_t(t)||_H^2}\\
		\displaystyle{\hspace{2cm}\leq \frac{3}{2}|k(t)\vert \max_{s\in [t-\bar{\tau},t]}||B^*u_t(s)||_H^2.}
	\end{array}
	$$
	Now, let us note that, from \eqref{e(t)}, for $t\geq \bar{\tau}$ it holds that
	$$\max_{s\in [t-\bar{\tau},t]}\{||B^*u_t(s)||_H^2\}\leq\max_{s\in [0,t]}\{||B^*u_t(s)||_H^2\}\leq b^2\max_{s\in [0,t]}\{||u_t(s)||_H^2\} \leq  2b^2\max_{s\in [0,t]}E(s)\leq 2 b^2 \mathcal{E}(t).$$
	On the other hand, if $t\in [0,\bar{\tau})$, using again \eqref{e(t)}, or 
	$$\max_{s\in [t-\bar{\tau},t]}\{||B^*u_t(s)||_H^2\}=\max_{s\in [0,t]}\{||B^*u_t(s)||_H^2\}\leq 2 b^2 \mathcal{E}(t),$$
	or 
	$$\max_{s\in [t-\bar{\tau},t]}\{||B^*u_t(s)||_H^2\}=\max_{s\in [-\bar{\tau},0]}\{||B^*u_t(s)||_H^2\}\leq b^2\max_{s\in [-\bar{\tau},0]}\{||g(s)||_H^2\}\leq 2 b^2 \mathcal{E}(t).$$
	Therefore, 
	$$\max_{s\in [t-\bar{\tau},t]}\{||B^*u_t(s)||_H^2\}\leq 2b^2 \mathcal{E}(t),\quad \forall t\geq 0,$$
	from which $$\frac{dE(t)}{dt}\leq 3 b^2|k(t)\vert\mathcal{E}(t),\quad \forall t\geq 0.$$
	As a consequence, since $\mathcal{E}(t)$ is constant or increases like $E(t)$, it turns out that
	$$\frac{d\mathcal{E}(t)}{dt}\leq 3b^2|k(t)\vert\mathcal{E}(t),\quad \forall t\geq 0.$$
	Then, the Gronwall's inequality yields
	$$\mathcal{E}(t)\leq e^{3b^2\int_{0}^{t}|k(s)\vert ds}\mathcal{E}(0).$$
	By definition of $\mathcal{E}(t)$, we finally get
	$$E(t)\leq \mathcal{E}(t)\leq e^{3b^2\int_{0}^{t}|k(s)\vert ds}\mathcal{E}(0),$$
	from which
	$$E(t)\leq  e^{3b^2\int_{0}^{t}|k(s)\vert ds}\mathcal{E}(0),$$
that ends the proof. \end{proof}

Before proving our well-posedness and stability results, we need some preliminary estimates. 

\begin{Lemma}
	\label{Lemma 2}
	Let $U(t)=(u(t),u_t(t),\eta^t)$ be a non-zero solution to \eqref{forma_astratta2} defined on an interval $[0, \delta),$ and let $T>\delta.$ Let $h$ be the strictly increasing function appearing in \eqref{stima_h}. 
	\begin{enumerate}
		\item If $h(||A^\frac{1}{2} u_0(0)||_H)<\frac{1-\tilde{\beta}}{2}$, then $E(0)>0$.
		\item Assume that $h(||A^\frac{1}{2} u_0(0)||_H)<\frac{1-\tilde{\beta}}{2}$ and that
		\begin{equation}\label{h1}
			h \left( \frac{2}{(1-\tilde{\beta})^\frac{1}{2}} C^{\frac{1}{2}}\mathcal{E}^\frac{1}{2}(0) \right) <\frac{1-\tilde{\beta}}{2},
		\end{equation}
		for some positive constant $C\geq \bar{C}(T)$, with $\bar{C}(\cdot)$ defined in  \eqref{Cbar}. Then
		\begin{equation}
			\label{stima E dal basso}
			\begin{array}{l}
				\vspace{0.3cm}\displaystyle{ E(t)>\frac{1}{4}||u_t(t)||_H^2+\frac{1-\tilde{\beta}}{4}||A^\frac{1}{2}u(t)||_H^2+\frac{1}{4}\int_{t-\bar\tau}^t |k(s)| \cdot ||B^*u_t(s)||_H^2 ds}\\
				\hspace{1.5 cm}
				\displaystyle{ +\frac{1}{4}\int_0^{+\infty} \beta(s) ||A^\frac{1}{2}\eta^t (s)||_H^2 ds,}
			\end{array}
		\end{equation}
		for all $t\in[0, \delta)$. In particular,
		\begin{equation}\label{J2}
			E(t)>  \frac 14 \Vert U(t)\Vert_{\mathcal H}^2, \quad \forall t\in [0, \delta).
		\end{equation}
	\end{enumerate}
\end{Lemma}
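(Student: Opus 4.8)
The whole argument hinges on a single estimate for the potential $\psi$. Since $\psi(0)=0$ and $\psi$ is G\^{a}teaux differentiable, I would write $\psi(u)=\int_0^1\frac{d}{d\sigma}\psi(\sigma u)\,d\sigma=\int_0^1\langle\nabla\psi(\sigma u),u\rangle_H\,d\sigma$, bound the integrand through \eqref{stima_h} together with the monotonicity of $h$ and the embedding $\|u\|_H\le\|A^{\frac12}u\|_H$, and use $\int_0^1\sigma\,d\sigma=\frac12$ to get
\[
\psi(u)\;\le\;\tfrac12\,h\big(\|A^{\frac12}u\|_H\big)\,\|A^{\frac12}u\|_H^{2}.
\]
This is the only point where (H3) enters, and it is calibrated exactly to the threshold $\frac{1-\tilde\beta}{2}$: if $h(\|A^{\frac12}u\|_H)<\frac{1-\tilde\beta}{2}$, then $\psi(u)<\frac{1-\tilde\beta}{4}\|A^{\frac12}u\|_H^{2}$.

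For part 1, I would set $t=0$ in \eqref{energia1} and replace $-\psi(u_0(0))$ by the bound above, obtaining $E(0)\ge\frac12\|u_1\|_H^2+\frac12\big((1-\tilde\beta)-h(\|A^{\frac12}u_0(0)\|_H)\big)\|A^{\frac12}u_0(0)\|_H^2+\frac12\int_0^{+\infty}\beta(s)\|A^{\frac12}\eta_0(s)\|_H^2\,ds+\frac12\int_{-\bar\tau}^0|k(s)|\,\|B^*g(s)\|_H^2\,ds$. Since $h(\|A^{\frac12}u_0(0)\|_H)<\frac{1-\tilde\beta}{2}<1-\tilde\beta$, every term is nonnegative, so $E(0)\ge 0$, and strict positivity follows from the fact that $U$ is a non-zero solution, which forces at least one of these data terms not to vanish.

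Part 2 is the heart of the matter, and its difficulty is circularity: to control $\psi(u(t))$ I need $h(\|A^{\frac12}u(t)\|_H)<\frac{1-\tilde\beta}{2}$, i.e.\ an a priori bound on $\|A^{\frac12}u(t)\|_H$, yet that bound is exactly what \eqref{stima E dal basso} provides through the growth estimate $E(t)\le\bar C(t)\mathcal E(0)$ of Lemma \ref{stimaE}. I would break the loop by a continuity/bootstrap argument. Set $\mathcal T:=\sup\{t\in[0,\delta):h(\|A^{\frac12}u(s)\|_H)<\frac{1-\tilde\beta}{2}\ \forall s\in[0,t]\}$; by the hypothesis of part 1 and continuity of $s\mapsto\|A^{\frac12}u(s)\|_H$, one has $\mathcal T>0$. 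Assume, for contradiction, $\mathcal T<\delta$. On $[0,\mathcal T)$ the $\psi$ estimate turns \eqref{energia1} into \eqref{stima E dal basso}, which in particular yields $E\ge\frac14\|u_t\|_H^2$ there and, by continuity, on $[0,\mathcal T]$; this is precisely hypothesis \eqref{e(t)} of Lemma \ref{stimaE} on $[0,\mathcal T]$, so $E(\mathcal T)\le\bar C(\mathcal T)\mathcal E(0)\le\bar C(T)\mathcal E(0)\le C\mathcal E(0)$, using that $\bar C$ is nondecreasing and $\mathcal T<\delta<T$. Passing \eqref{stima E dal basso} to the limit gives $E(\mathcal T)\ge\frac{1-\tilde\beta}{4}\|A^{\frac12}u(\mathcal T)\|_H^2$, whence $\|A^{\frac12}u(\mathcal T)\|_H\le\frac{2}{(1-\tilde\beta)^{1/2}}C^{1/2}\mathcal E^{1/2}(0)$, and then \eqref{h1} together with the monotonicity of $h$ forces $h(\|A^{\frac12}u(\mathcal T)\|_H)<\frac{1-\tilde\beta}{2}$ strictly. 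Continuity propagates this strict inequality beyond $\mathcal T$, contradicting the definition of $\mathcal T$; hence $\mathcal T=\delta$ and \eqref{stima E dal basso} holds on all of $[0,\delta)$. Finally \eqref{J2} is immediate, since by \eqref{inner} one has $\|U(t)\|_{\mathcal H}^2=(1-\tilde\beta)\|A^{\frac12}u\|_H^2+\|u_t\|_H^2+\int_0^{+\infty}\beta\|A^{\frac12}\eta^t\|_H^2$, so the right-hand side of \eqref{stima E dal basso} equals $\frac14\|U(t)\|_{\mathcal H}^2$ plus the nonnegative delay term.

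The main obstacle is exactly this self-referential structure, which the bootstrap resolves; the only remaining delicate point is the strictness of the inequalities, which I would trace to the observation that equality in \eqref{stima E dal basso} at some $t_0$ would force $U(t_0)=0$ together with the vanishing of $\int_{t_0-\bar\tau}^{t_0}|k(s)|\,\|B^*u_t(s)\|_H^2\,ds$, and this is incompatible with $U$ being a non-zero solution.
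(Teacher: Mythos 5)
Your proof is correct and follows essentially the same route as the paper's: the same bound $\psi(u)\le\frac12 h(\|A^{\frac12}u\|_H)\|A^{\frac12}u\|_H^2$ drawn from (H3), the direct computation for $E(0)$, and a continuity/bootstrap contradiction on the maximal interval, closed by feeding $E\ge\frac14\|u_t\|_H^2$ into Lemma \ref{stimaE} and then using \eqref{h1}. The only cosmetic difference is that you parametrize the maximal interval by the condition $h(\|A^{\frac12}u(s)\|_H)<\frac{1-\tilde\beta}{2}$ while the paper parametrizes it by the validity of \eqref{stima E dal basso} itself; the two are interchangeable.
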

\begin{Remark}
	Let us note that \eqref{h1} implies that 
	\begin{equation}\label{h2}
		h \left( \frac{2}{(1-\tilde{\beta})^\frac{1}{2}} \bar{C}^{\frac{1}{2}}(T)\mathcal{E}^\frac{1}{2}(0) \right) <\frac{1-\tilde{\beta}}{2},
	\end{equation}
being the positive constant $C$ in \eqref{h1} bigger or equal than $\bar{C}(T)$ and being the function $h$ strictly increasing.
\end{Remark}
\begin{proof}
	From the assumption (H3) on the function $\psi$, we can write 
	\begin{equation}
		\label{assumptionPsi}
		\begin{array}{l}
			\displaystyle{|\psi(u)|\leq \int_0^1 |\langle \nabla \psi (su),u\rangle_H | ds} \\
			\hspace{1,15 cm}
			\displaystyle{\leq  ||A^\frac{1}{2}u||^2_H \int_0^1 h(s||A^\frac{1}{2}u||_H)sds}\\
			\hspace{1,15 cm}\displaystyle{\leq  h(||A^\frac{1}{2}u||_H)||A^\frac{1}{2}u||^2_H\int_{0}^{1}sds=\frac{1}{2}h(||A^\frac{1}{2}u||_H)||A^\frac{1}{2}u||^2_H,}
		\end{array}
	\end{equation}
where we used the fact that $h$ is a strictly increasing function and the fact that $||u||_{D(A^\frac{1}{2})}=(1-\tilde{\beta})||A^\frac{1}{2}u||_H$ with $\tilde{\beta}<1$.
\\Now, being $U$ a non-zero solution to \eqref{forma_astratta2}, the initial datum $\tilde{g}$ satisfies $\mathcal{B}\tilde{g}\neq 0$. Indeed, if the initial datum $\tilde{g}$ is such that $\mathcal{B}\tilde{g}\equiv 0$, then the unique solution to \eqref{forma_astratta2} is $U\equiv 0$. As a consequence $B^*g\neq 0$ since, otherwise, being $B$ a linear operator, we would have $0=BB^*g=\mathcal{B}\tilde{g}$. Hence, from the assumption $h (\Vert A^{\frac 12} u_0(0)\Vert_H) < \frac {1-\tilde \beta} 2$ and from \eqref{assumptionPsi}, we have that
	$$
	\begin{array}{l}
		\vspace{0.3cm}\displaystyle{ E(0)=\frac{1}{2}||u_1||_H^2+\frac{1-\tilde{\beta}}{2}||A^\frac{1}{2}u_0(0)||_H^2-\psi(u_0(0))+\frac{1}{2}\int_{-\bar\tau}^0 |k(s)|\cdot ||B^*g(s)||^2_H ds}\\
		\hspace{1.3 cm}
		\vspace{0.3cm}\displaystyle{+\frac{1}{2}\int_0^{+\infty} \beta(s)||A^\frac{1}{2}\eta_0(s)||_H^2 ds}\\
		\hspace{0,9 cm}
		\vspace{0.3cm}\displaystyle{\geq \frac{1}{2}||u_1||_H^2+\frac{1-\tilde{\beta}}{2}||A^\frac{1}{2}u_0(0)||_H^2-\frac{1}{2}h(||A^\frac{1}{2}u_0(0)||_H)||A^\frac{1}{2}u_0(0)||_H^2}\\
		\hspace{1.3 cm}
		\vspace{0.3cm}\displaystyle{+\frac{1}{2}\int_{-\bar\tau}^0 |k(s)|\cdot ||B^*g(s)||^2_H ds+\frac{1}{2}\int_0^{+\infty} \beta(s)||A^\frac{1}{2}\eta_0(s)||_H^2 ds}\\
		\hspace{0,9 cm}
		\vspace{0.3cm}\displaystyle{ \ge\frac{1}{2}||u_1||^2_H+\frac{1-\tilde{\beta}}{4}||A^\frac{1}{2}u_0(0)||^2_H +\frac{1}{2}\int_{-\bar\tau}^0 |k(s)| \cdot ||B^*g(s)||^2_H ds }\\
		\hspace{1.3 cm}
		\displaystyle{+\frac{1}{2}\int_0^{+\infty} \beta(s) ||A^\frac{1}{2} \eta_0(s)||^2_H ds>0.}
	\end{array}
	$$
	So, the claim $1$ is proven.
	\\In order to prove the second statement, we argue by contradiction. Let us denote
	$$
	r:=\sup \{ s\in [0,\delta) : \,\eqref{stima E dal basso}\, \text{holds},\, \forall t\in [0,s)\}.
	$$
	We suppose by contradiction that $r<\delta$. Then, by continuity, it holds
	\begin{equation}
		\label{continuita}
		\begin{array}{l}
			\displaystyle{E(r)=\frac{1}{4}||u_t(r)||^2_H+\frac{1-\tilde{\beta}}{4}||A^\frac{1}{2}u(r)||_H^2+\frac{1}{4}\int_{r-\bar\tau}^r |k(s)| \cdot ||B^*u_t(s)||_H^2 ds}\\
			\hspace{2 cm}
			\displaystyle{ +\frac{1}{4} \int_0^{+\infty} \beta(s)||A^\frac{1}{2}\eta^r(s)||_H^2 ds.}
		\end{array}
	\end{equation}
	In particular, \eqref{continuita} implies that
	$$
	\frac{1-\tilde{\beta}}{4} \Vert A^{\frac 1 2} u(r)\Vert^2_H\leq E(r).
	$$
	Also, by definition of $r$, for all $t\in [0,r]$, 

$$\begin{array}{l}
\displaystyle{E(t)\geq \frac{1}{4}||u_t(t)||_H^2+\frac{1-\tilde{\beta}}{4}||A^\frac{1}{2}u(t)||_H^2}\\
\displaystyle{\hspace{2 cm}+\frac{1}{4}\int_{t-\bar\tau}^t |k(s)| \cdot ||B^*u_t(s)||_H^2 ds +\frac{1}{4}\int_0^{+\infty} \beta(s) ||A^\frac{1}{2}\eta^t (s)||_H^2 ds}\\
\displaystyle{\hspace{1,5 cm}\geq \frac{1}{4}||u_t(t)||_H^2.}
\end{array}
$$
	Thus, the assumption \eqref{e(t)} of Lemma \ref{stimaE} is satisfied and we can write
	$$E(t)\leq \bar{C}(t)\mathcal{E}(0), \quad \forall t\in [0,r],$$
	from which, being $\bar{C}(t)\leq \bar{C}(T)$, it comes that
	$$E(t)\leq \bar{C}(T)\mathcal{E}(0),\quad \forall t\in [0,r].$$
	In particular, for $t=r$,
	$$E(r)\leq \bar{C}(T)\mathcal{E}(0).$$
	As a consequence, $$\frac{1-\tilde{\beta}}{4} \Vert A^{\frac 1 2} u(r)\Vert^2_H\leq E(r)\leq \bar{C}(T)\mathcal{E}(0).$$
	Thus, since $h$ is strictly increasing, from \eqref{h1} (which implies \eqref{h2}) we have that
	\begin{equation}\label{risultato}
		\begin{array}{l}
			\displaystyle{ h(||A^\frac{1}{2}u(r)||_H)\leq h\left( \frac{2}{(1-\tilde{\beta})^\frac{1}{2}}\bar{C}^\frac{1}{2}(T)\mathcal{E}^\frac{1}{2}(0)\right) <\frac{1-\tilde{\beta}}{2}.}
		\end{array}
	\end{equation}
	Finally, using \eqref{assumptionPsi} and \eqref{risultato} we can conclude that
	$$
	\begin{array}{l}
		\vspace{0.3cm}\displaystyle{ E(r)=
			\frac{1}{2}||u_t(r)||_H^2+\frac{1-\tilde{\beta}}{2}||A^\frac{1}{2}u(r)||_H^2-\psi(u(r))+\frac{1}{2}\int_{r-\bar\tau}^r|k(s)|\cdot ||B^*u_t(s)||^2_H ds}\\
		\hspace{3 cm}
		\displaystyle { +\frac{1}{2}\int_0^{+\infty} \beta(s) ||A^\frac{1}{2} \eta^r(s)||_H^2 ds}\\
		\hspace{1 cm} \displaystyle{
			>\frac{1}{4}||u_t(r)||_H^2+\frac{1-\tilde{\beta}}{4}||A^\frac{1}{2}u(r)||_H^2+\frac{1}{4}\int_{r-\bar\tau}^r|k(s)| \cdot ||B^*u_t(s)||_H^2 ds}\\
		\hspace{3 cm}
		\displaystyle{ +\frac{1}{4}\int_0^{+\infty} \beta(s) ||A^\frac{1}{2} \eta^r(s)||_H^2 ds.}
	\end{array}
	$$
	This contradicts the maximality of $r$. So, $r=\delta$ and the proof is completed.
\end{proof}

\section{Stability result}

\setcounter{equation}{0}

First, we give a stability result for the abstract model \eqref{forma_astratta2} under suitable assumptions. More precisely, we prove an exponential stability estimate for solutions to \eqref{forma_astratta2} corresponding to {\em small} initial data.

Our abstract result is the following.
\begin{Theorem}\label{StabilityAbstract}
Assume \eqref{ipotesi2}. Let $\tilde g\in C([-\bar\tau, 0];\mathcal{H})$ and let $U$ be a solution to \eqref{forma_astratta2} with the initial datum $\tilde{g}$, defined in a time interval $[0,T]$, $T>0$, that satisfies 
\begin{equation}\label{boundedsolution}
	\lVert U(t)\rVert_{\mathcal{H}}\leq C,\quad \forall t\in [0,T],
\end{equation}
for some $C>0$ such that $L(C)<\frac{\omega-\omega'}{M}$.
\\Then, $U$ satisfies the exponential decay estimate 
\begin{equation}
\label{stimaesponenziale}
\Vert  U(t)\Vert_{\mathcal H}\le Me^{\gamma}\left(||U_0||_{\mathcal H}+e^{\omega\bar{\tau}}Kb^2\max_{r\in[-\bar{\tau},0]}\{e^{\omega r}||\tilde{g}(r)||_{\mathcal H}\}\right)e^{-(\omega-\omega'-ML(C))t},
\end{equation}
for all $t\in [0,T]$.
\end{Theorem}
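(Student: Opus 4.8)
The plan is to start from the variation-of-constants (mild solution) representation already used in the proof of Theorem \ref{lemma1senzabound}, namely
\begin{equation*}
U(t)=S(t)U_0+\int_0^t S(t-s)\bigl[F(U(s))+k(s)\mathcal{B}U(s-\tau(s))\bigr]\,ds,\qquad t\in[0,T],
\end{equation*}
and to turn it into a scalar integral inequality for a weighted norm. Taking $\mathcal H$-norms, invoking the exponential bound \eqref{semigruppo}, the identity $\Vert\mathcal B\Vert_{\mathcal L(\mathcal H)}=b^2$, and the Lipschitz property (F2) together with $F(0)=0$ from (F1) — which under the standing bound \eqref{boundedsolution} gives $\Vert F(U(s))\Vert_{\mathcal H}\le L(C)\Vert U(s)\Vert_{\mathcal H}$ — I obtain
\begin{equation*}
\Vert U(t)\Vert_{\mathcal H}\le Me^{-\omega t}\Vert U_0\Vert_{\mathcal H}+M\int_0^t e^{-\omega(t-s)}\bigl(L(C)\Vert U(s)\Vert_{\mathcal H}+b^2|k(s)|\,\Vert U(s-\tau(s))\Vert_{\mathcal H}\bigr)\,ds.
\end{equation*}

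Next I would introduce the weight, setting $\phi(r):=e^{\omega r}\Vert U(r)\Vert_{\mathcal H}$ on $[-\bar\tau,T]$ (with $U(r)=\tilde g(r)$ for $r\le 0$) and $\Psi(t):=\sup_{r\in[0,t]}\phi(r)$. Multiplying through by $e^{\omega t}$ transforms the inequality into
\begin{equation*}
\phi(t)\le M\Vert U_0\Vert_{\mathcal H}+ML(C)\int_0^t\phi(s)\,ds+Mb^2\int_0^t e^{\omega s}|k(s)|\,\Vert U(s-\tau(s))\Vert_{\mathcal H}\,ds.
\end{equation*}
The delay factor is absorbed by \eqref{tau_bounded}: since $\tau(s)\le\bar\tau$ one has $e^{\omega s}\le e^{\omega\bar\tau}e^{\omega(s-\tau(s))}$, hence $e^{\omega s}\Vert U(s-\tau(s))\Vert_{\mathcal H}\le e^{\omega\bar\tau}\phi(s-\tau(s))$.

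The decisive bookkeeping step is to split the delay integral according to the sign of $s-\tau(s)$. Whenever $s\ge\bar\tau$ one has $s-\tau(s)\ge 0$, so the set $\{s\in[0,t]:s-\tau(s)<0\}$ is contained in $[0,\bar\tau)$; on it $U(s-\tau(s))=\tilde g(s-\tau(s))$ and $\phi(s-\tau(s))\le G$, where $G:=\max_{r\in[-\bar\tau,0]}\{e^{\omega r}\Vert\tilde g(r)\Vert_{\mathcal H}\}$, so by \eqref{K} applied with $t=\bar\tau$ this portion contributes at most $Mb^2e^{\omega\bar\tau}KG$. On the complementary set $\phi(s-\tau(s))\le\Psi(s)$. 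Collecting terms and using $\phi(s)\le\Psi(s)$ also in the $L(C)$-term makes the right-hand side nondecreasing, so passing to the supremum over $[0,t]$ yields
\begin{equation*}
\Psi(t)\le \bigl(M\Vert U_0\Vert_{\mathcal H}+Mb^2e^{\omega\bar\tau}KG\bigr)+\int_0^t\bigl(ML(C)+Mb^2e^{\omega\bar\tau}|k(s)|\bigr)\Psi(s)\,ds.
\end{equation*}

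Finally I would apply Gronwall's inequality, producing the exponential factor $\exp\bigl(ML(C)t+Mb^2e^{\omega\bar\tau}\int_0^t|k(s)|\,ds\bigr)$, and here is where hypothesis \eqref{ipotesi2} enters crucially: it bounds $b^2Me^{\omega\bar\tau}\int_0^t|k(s)|\,ds$ by $\gamma+\omega't$, turning the exponent into $\gamma+(\omega'+ML(C))t$. Since $\phi(t)\le\Psi(t)$, dividing by $e^{\omega t}$ and recalling $L(C)<(\omega-\omega')/M$ delivers the stated estimate \eqref{stimaesponenziale} with the positive decay rate $\omega-\omega'-ML(C)$. I expect the main obstacle to be exactly the delay-term bookkeeping of the previous step: correctly isolating the initial-data contribution (which must reproduce the $e^{\omega\bar\tau}Kb^2G$ term via \eqref{K}) from the part that feeds the Gronwall kernel, and verifying that the resulting right-hand side is genuinely monotone so that the passage to the supremum $\Psi$ is legitimate.
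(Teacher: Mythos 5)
Your proposal is correct and follows essentially the same route as the paper: Duhamel's formula, the weighted norm $e^{\omega t}\Vert U(t)\Vert_{\mathcal H}$ with the factor $e^{\omega\bar\tau}$ absorbing the delay, isolation of the initial-data contribution via \eqref{K} on $[0,\bar\tau]$, Gronwall, and finally \eqref{ipotesi2}. The only (harmless) differences are that you split the delay integral by the sign of $s-\tau(s)$ rather than at $s=\bar\tau$, and you run Gronwall on the full running supremum $\sup_{r\in[0,t]}e^{\omega r}\Vert U(r)\Vert_{\mathcal H}$ instead of the paper's windowed maximum over $[t-\bar\tau,t]\cap[0,t]$; both choices yield the identical constant and decay rate.
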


\begin{proof}
Let $\tilde{g}\in C([-\bar\tau, 0];\mathcal{H})$. Let $U$ be a solution to \eqref{forma_astratta2} with the initial datum $\tilde{g}$. 
From Duhamel's formula, for all $t\in [0,T]$ we have
$$U(t)=S(t)U_0+\int_0^tS(t-s)[-k(s){\mathcal B}U(s-\tau(s))+F(U(s))]ds.$$ 
Thus, using \eqref{semigruppo}, we get
$$\begin{array}{l}
	\vspace{0.3cm}\displaystyle{||U(t)||_{\mathcal H}\leq ||S(t)||_{\mathcal{L}(\mathcal{H})}||U_0||_{\mathcal H}+\int_0^t||S(t-s)||_{\mathcal{L}(\mathcal{H})}|k(s)|\cdot ||\mathcal{B}U(s-\tau(s))||_{\mathcal H}ds}\\
	\vspace{0.3cm}\displaystyle{\hspace{2.2cm}+\int_0^t||S(t-s)||_{\mathcal{L}(\mathcal{H})} ||F(U(s))||_{\mathcal H}ds}\\
	\vspace{0.3cm}\displaystyle{\hspace{1.6cm}\leq Me^{-\omega t}||U_0||_{\mathcal H}+Me^{-\omega t}\int_0^te^{\omega s}|k(s)|\cdot ||\mathcal{B}U(s-\tau(s))||_{\mathcal H}ds}\\
	\displaystyle{\hspace{2.2cm}+Me^{-\omega t}\int_0^te^{\omega s}||F(U(s))||_{\mathcal H}ds.}
\end{array}$$ 
Now, using the assumptions $(F_1)$ and $(F_2)$ on $F$ and taking into account of \eqref{boundedsolution}, we can write $$||F(U(s))||_{\mathcal H}=||F(U(s))-F(0)||_{\mathcal H}\leq L(C)||U(s)||_{\mathcal H}.$$
This last fact together with \eqref{tau_bounded} implies that
\begin{equation}\label{1}
\begin{array}{l}
\vspace{0.3cm}\displaystyle{ ||U(t)||_{\mathcal H}\leq Me^{-\omega t}||U_0||_{\mathcal H}+Me^{-\omega t} \int_0^t e^{\omega s} |k(s)|\cdot ||\mathcal{B}U(s-\tau(s))||_{\mathcal H}ds  } \\
\vspace{0.3cm}
\displaystyle{ \hspace{2.3 cm}+ML(C)e^{-\omega t} \int_0^t e^{\omega s} ||U(s)||_{\mathcal H} ds}\\
\vspace{0.3cm}\displaystyle{\hspace{1.6 cm} \leq Me^{-\omega t}||U_0||_{\mathcal H}+Me^{-\omega t} e^{\omega\bar{\tau}}\int_0^t e^{\omega (s-\tau(s))} |k(s)|\cdot ||\mathcal{B}U(s-\tau(s))||_{\mathcal H}ds }\\
\displaystyle{ \hspace{2.3 cm}+ML(C)e^{-\omega t} \int_0^t e^{\omega s} ||U(s)||_{\mathcal H} ds.}
\end{array}
\end{equation}
Now, we assume $t\geq\bar{\tau}$. We split
\begin{equation}\label{casotmagtau}
	\begin{array}{l}
		\vspace{0.3cm}\displaystyle{\int_0^t e^{\omega (s-\tau(s))} |k(s)|\cdot ||\mathcal{B}U(s-\tau(s))||_{\mathcal H}ds=\int_0^{\bar{\tau}} e^{\omega (s-\tau(s))} |k(s)|\cdot ||\mathcal{B}U(s-\tau(s))||_{\mathcal H}ds}\\
		\displaystyle{\hspace{2cm}+\int_{\bar{\tau}}^t e^{\omega (s-\tau(s))} |k(s)|\cdot ||\mathcal{B}U(s-\tau(s))||_{\mathcal H}ds.}
	\end{array} 
\end{equation}
We first estimate, using \eqref{K} with $t=\bar{\tau}$,
$$\begin{array}{l}
	\vspace{0.3cm}\displaystyle{\int_0^{\bar{\tau}} e^{\omega (s-\tau(s))} |k(s)|\cdot ||\mathcal{B}U(s-\tau(s))||_{\mathcal H}ds}\\
	\vspace{0.3cm}\displaystyle{\hspace{1cm}\leq b^2\int_0^{\bar{\tau}}  |k(s)|\left(\max_{r\in[-\bar{\tau},0]}\{e^{\omega r}||\tilde{g}(r)||_{\mathcal H}\}+\max_{r\in[0,s]}\{e^{\omega r}||U(r)||_{\mathcal H}\}\right) ds}\\
	\vspace{0.3cm}\displaystyle{\hspace{1cm}\leq Kb^2\max_{r\in[-\bar{\tau},0]}\{e^{\omega r}||\tilde{g}(r)||_{\mathcal H}\}+b^2\int_{0}^{\bar{\tau}}|k(s)|\max_{r\in[0,s]}\{e^{\omega r}||U(r)||_{\mathcal H}\} ds.}\\
	\displaystyle{\hspace{1cm}= Kb^2\max_{r\in[-\bar{\tau},0]}\{e^{\omega r}||\tilde{g}(r)||_{\mathcal H}\}+b^2\int_{0}^{\bar{\tau}}|k(s)|\max_{r\in[s-\bar{\tau},s]\cap[0,s]}\{e^{\omega r}||U(r)||_{\mathcal H}\} ds.}
\end{array}$$
Also,
$$\begin{array}{l}
	\vspace{0.3cm}\displaystyle{\int_{\bar{\tau}}^t e^{\omega (s-\tau(s))} |k(s)|\cdot ||\mathcal{B}U(s-\tau(s))||_{\mathcal H}ds\leq b^2\int_{\bar{\tau}}^t e^{\omega (s-\tau(s))} |k(s)|\cdot ||U(s-\tau(s))||_{\mathcal H}ds}\\
	\vspace{0.3cm}\displaystyle{\hspace{3cm}\leq b^2 \int_{\bar{\tau}}^t|k(s)|\max_{r\in[s-\bar{\tau},s]}\{e^{\omega r}||U(r)||_{\mathcal H}\} ds}\\
	\displaystyle{\hspace{3cm}= b^2\int_{\bar{\tau}}^t|k(s)|\max_{r\in[s-\bar{\tau},s]\cap[0,s]}\{e^{\omega r}||U(r)||_{\mathcal H}\} ds.}
\end{array}$$
Therefore, \eqref{casotmagtau} becomes 
\begin{equation}\label{casotmagtau2}
	\begin{array}{l}
		\vspace{0.3cm}\displaystyle{\int_0^t e^{\omega (s-\tau(s))} |k(s)|\cdot ||\mathcal{B}U(s-\tau(s))||_{\mathcal H}ds\leq Kb^2\max_{r\in[-\bar{\tau},0]}\{e^{\omega r}||\tilde{g}(r)||_{\mathcal H}\}}\\
		\displaystyle{\hspace{3cm}+\int_{0}^{t}b^2|k(s)|\max_{r\in[s-\bar{\tau},s]\cap[0,s]}\{e^{\omega r}||U(r)||_{\mathcal H}\} ds,}
	\end{array}
\end{equation}
for all $t\geq \bar{\tau}$.
\\On the other hand, if $t<\bar{\tau}$, using \eqref{K} it rather holds
$$\begin{array}{l}
	\vspace{0.3cm}\displaystyle{\int_0^{t} e^{\omega (s-\tau(s))} |k(s)|\cdot ||\mathcal{B}U(s-\tau(s))||_{\mathcal H}ds}\\
	\vspace{0.3cm}\displaystyle{\hspace{2cm}\leq b^2\int_0^t |k(s)| \max_{r\in[-\bar{\tau},0]}\{e^{\omega r}||\tilde{g}(r)||_{\mathcal H}\}ds+b^2\int_{0}^{t}|k(s)|\max_{r\in[0,s]}\{e^{\omega r}||U(r)||_{\mathcal H}\} ds}\\
	\vspace{0.3cm}
	\displaystyle{\hspace{2cm}= b^2\int_0^t |k(s)| \max_{r\in[-\bar{\tau},0]}\{e^{\omega r}||\tilde{g}(r)||_{\mathcal H}\}ds+b^2\int_{0}^{t}|k(s)|\max_{r\in[s-\bar{\tau},s]\cap[0,s]}\{e^{\omega r}||U(r)||_{\mathcal H}\} ds}\\
	\displaystyle{\hspace{2cm}\leq b^2\int_0^{\bar{\tau}} |k(s)| \max_{r\in[-\bar{\tau},0]}\{e^{\omega r}||\tilde{g}(r)||_{\mathcal H}\}ds+b^2\int_{0}^{t}|k(s)|\max_{r\in[s-\bar{\tau},s]\cap[0,s]}\{e^{\omega r}||U(r)||_{\mathcal H}\} ds}\\
	\displaystyle{\hspace{2cm}\leq Kb^2\max_{r\in[-\bar{\tau},0]}\{e^{\omega r}||\tilde{g}(r)||_{\mathcal H}\}+b^2\int_{0}^{t}|k(s)|\max_{r\in[s-\bar{\tau},s]\cap[0,s]}\{e^{\omega r}||U(r)||_{\mathcal H}\} ds.}
\end{array}$$
So, \eqref{casotmagtau2} holds for every $t\in [0,T]$. Putting \eqref{casotmagtau2} in \eqref{1}, we can write
$$\begin{array}{l}
	\vspace{0.3cm}\displaystyle{ ||U(t)||_{\mathcal H}\leq Me^{-\omega t}\left(||U_0||_{\mathcal H}+e^{\omega\bar{\tau}}Kb^2\max_{r\in[-\bar{\tau},0]}\{e^{\omega r}||\tilde{g}(r)||_{\mathcal H}\}\right)}\\
	\vspace{0.3cm}\displaystyle{\hspace{1cm}+Me^{-\omega t}e^{\omega\bar{\tau}}b^2\int_{0}^{t}|k(s)|\max_{r\in[s-\bar{\tau},s]\cap[0,s]}\{e^{\omega r}||U(r)||_{\mathcal H}\} ds+ML(C)e^{-\omega t} \int_0^t e^{\omega s} ||U(s)||_{\mathcal H} ds,}\\	
	\vspace{0.3cm}\displaystyle{ \hspace{0.5cm}\leq Me^{-\omega t}\left(||U_0||_{\mathcal H}+e^{\omega\bar{\tau}}Kb^2\max_{r\in[-\bar{\tau},0]}\{e^{\omega r}||\tilde{g}(r)||_{\mathcal H}\}\right)}\\
	\displaystyle{\hspace{1cm}+e^{-\omega t}\int_{0}^{t}(Me^{\omega\bar{\tau}}b^2|k(s)|+ML(C))\max_{r\in[s-\bar{\tau},s]\cap[0,s]}\{e^{\omega r}||U(r)||_{\mathcal H}\} ds,}
\end{array}$$
from which
$$
\begin{array}{l}
\displaystyle{ e^{\omega t} ||U(t)||_{\mathcal H} \leq  M\left(||U_0||_{\mathcal H}+e^{\omega\bar{\tau}}Kb^2\max_{r\in[-\bar{\tau},0]}\{e^{\omega r}||\tilde{g}(r)||_{\mathcal H}\}\right)}\\
\hspace{2 cm}
\displaystyle{\hspace{1cm} +\int_{0}^{t}(Me^{\omega\bar{\tau}}b^2|k(s)|+ML(C))\max_{r\in[s-\bar{\tau},s]\cap[0,s]}\{e^{\omega r}||U(r)||_{\mathcal H}\} ds,}
\end{array}
$$
for all $t\in [0,T]$. Thus,
$$
\begin{array}{l}
	\displaystyle{ \max_{r\in[t-\bar{\tau},t]\cap[0,t]}\{e^{\omega r} ||U(r)||_{\mathcal H} \}\leq  M\left(||U_0||_{\mathcal H}+e^{\omega\bar{\tau}}Kb^2\max_{r\in[-\bar{\tau},0]}\{e^{\omega r}||\tilde{g}(r)||_{\mathcal H}\}\right)}\\
	\hspace{2 cm}
	\displaystyle{\hspace{1.5cm} +\int_{0}^{t}(Me^{\omega\bar{\tau}}b^2|k(s)|+ML(C))\max_{r\in[s-\bar{\tau},s]\cap[0,s]}\{e^{\omega r}||U(r)||_{\mathcal H}\} ds.}
\end{array}
$$
Hence, denoted with 
$$\tilde{U}(t):=\max_{r\in[t-\bar{\tau},t]\cap[0,t]}\{e^{\omega r} ||U(r)||_{\mathcal H} \},$$
using Gronwall's inequality we get
$$
\Vert \tilde U(t)\Vert_{\mathcal H}\le
M\left(||U_0||_{\mathcal H}+e^{\omega\bar{\tau}}Kb^2\max_{r\in[-\bar{\tau},0]}\{e^{\omega r}||\tilde{g}(r)||_{\mathcal H}\}\right)e^{Mb^2e^{\omega\bar{\tau}}\int_{0}^{t}|k(s)|ds+ML(C)t}.
$$
Finally,
$$e^{\omega t}\Vert U(t)\Vert_{\mathcal H}\leq M\left(||U_0||_{\mathcal H}+e^{\omega\bar{\tau}}Kb^2\max_{r\in[-\bar{\tau},0]}\{e^{\omega r}||\tilde{g}(r)||_{\mathcal H}\}\right)e^{Mb^2e^{\omega\bar{\tau}}\int_{0}^{t}|k(s)|ds+ML(C)t},$$
which implies that
$$\Vert U(t)\Vert_{\mathcal H}\leq M\left(||U_0||_{\mathcal H}+e^{\omega\bar{\tau}}Kb^2\max_{r\in[-\bar{\tau},0]}\{e^{\omega r}||\tilde{g}(r)||_{\mathcal H}\}\right)e^{Mb^2e^{\omega\bar{\tau}}\int_{0}^{t}|k(s)|ds+ML(C)t}e^{-\omega t}.$$
From \eqref{ipotesi2}, we can conclude that
$$\Vert  U(t)\Vert_{\mathcal H}\le Me^{\gamma}\left(||U_0||_{\mathcal H}+e^{\omega\bar{\tau}}Kb^2\max_{r\in[-\bar{\tau},0]}\{e^{\omega r}||\tilde{g}(r)||_{\mathcal H}\}\right)e^{-(\omega-\omega'-ML(C))t},$$
for all $t\in [0,T]$, which proves the exponential decay estimate \eqref{stimaesponenziale}. 
\end{proof}
Now, we prove that, for sufficiently small initial data, the system \eqref{modello} has a unique global solution that satisfies \eqref{boundedsolution}, whenever the coefficient of the delay feedback $k(t)$ satisfies \eqref{ipotesi2}.
\begin{Theorem}\label{teorema2.5}
	Assume \eqref{ipotesi2}. There exist $\rho>0$ and $C_{\rho}>0$, with $L(C_{\rho})<\frac{\omega-\omega'}{2M}$, for which, if $\tilde g=(u_0(0),g,\eta_0)$ is such that
	\begin{equation}\label{smallness}
		\begin{array}{l}
			\displaystyle{||u_1||^2_{H}+(1-\tilde{\beta})||A^{\frac{1}{2}}u_0(0)||^2_{H}+\int_{-\bar\tau}^0|k(s)|\cdot||B^*g(s)||^2_{H}}\\
			\displaystyle{\hspace{5cm}+\int_{0}^{+\infty}\beta(s)||A^{\frac{1}{2}}\eta_0(s)||^2_{H}ds<\rho^2}
\end{array}
\end{equation}
and
\begin{equation}\label{smallness_bis}
			\displaystyle{\max_{s\in [-\bar{\tau},0]}||g(s)||_{H}<\rho,}
	\end{equation}
then the problem \eqref{modelloDafermos} with  initial datum $\tilde{g}$ has a unique global solution $U\in C([0,+\infty)),\mathcal{H})$ that satisfies
\begin{equation}\label{boundedsol2}
	\lVert U(t)\rVert_{\mathcal{H}}\leq C_{\rho},\quad \forall t\geq 0.
\end{equation} 
\end{Theorem}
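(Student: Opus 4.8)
The plan is to prove Theorem~\ref{teorema2.5} by a continuation (bootstrap) argument that couples the local existence result, Theorem~\ref{lemma1senzabound}, with the a priori bound furnished by the abstract decay estimate, Theorem~\ref{StabilityAbstract}. The key observation is that \eqref{stimaesponenziale} is not merely a decay statement: as soon as $L(C)<\frac{\omega-\omega'}{M}$ its exponent $\omega-\omega'-ML(C)$ is positive, so $e^{-(\omega-\omega'-ML(C))t}\le 1$ and the estimate yields a \emph{uniform} bound $\Vert U(t)\Vert_{\mathcal H}\le Me^{\gamma}(\cdots)$ whose right-hand side depends only on the initial datum. Hence, once the solution is known to remain in the ball where $L(C)<\frac{\omega-\omega'}{M}$, its norm is automatically controlled by the data; for small data this control is strictly better than the radius of the ball, which is exactly what makes the bootstrap close.

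First I would fix the constants. Choose $C_\rho>0$ with $L(C_\rho)<\frac{\omega-\omega'}{2M}$ (possible by taking $C_\rho$ small, since $F(0)=0$); note $\frac{\omega-\omega'}{2M}<\frac{\omega-\omega'}{M}$, so Theorem~\ref{StabilityAbstract} applies with $C=C_\rho$ and $\omega-\omega'-ML(C_\rho)>\frac{\omega-\omega'}{2}>0$, whence $e^{-(\omega-\omega'-ML(C_\rho))t}\le1$. Next I record how the smallness hypotheses control the data: from \eqref{smallness} one gets $\Vert U_0\Vert_{\mathcal H}<\rho$ (it omits only the nonnegative delay term), while \eqref{smallness} together with \eqref{smallness_bis} gives, for every $r\in[-\bar\tau,0]$, $\Vert\tilde g(r)\Vert_{\mathcal H}^2=(1-\tilde\beta)\Vert A^{\frac12}u_0(0)\Vert_H^2+\Vert g(r)\Vert_H^2+\int_0^{+\infty}\beta(s)\Vert A^{\frac12}\eta_0(s)\Vert_H^2\,ds<2\rho^2$; since $e^{\omega r}\le1$ there, $\max_{r\in[-\bar\tau,0]}\{e^{\omega r}\Vert\tilde g(r)\Vert_{\mathcal H}\}<\sqrt2\,\rho$. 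Setting $\Lambda:=Me^{\gamma}\big(1+\sqrt2\,e^{\omega\bar\tau}Kb^2\big)$, the right-hand side of \eqref{stimaesponenziale} is then $<\Lambda\rho$, and I finally pick $\rho>0$ so small that $\Lambda\rho<C_\rho$ (in particular $\rho<C_\rho$).

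Then I carry out the continuation. By Theorem~\ref{lemma1senzabound} the datum $\tilde g$ produces a unique maximal solution on some $[0,T_{\max})$ with $\Vert U(0)\Vert_{\mathcal H}=\Vert U_0\Vert_{\mathcal H}<\rho<C_\rho$. Put $\mathcal T:=\sup\{T\in[0,T_{\max}):\Vert U(t)\Vert_{\mathcal H}\le C_\rho\ \forall t\in[0,T]\}$, positive by continuity. On any $[0,\mathcal T']$ with $\mathcal T'<\mathcal T$ the solution satisfies \eqref{boundedsolution} with $C=C_\rho$, so Theorem~\ref{StabilityAbstract} applies and, using $e^{-(\omega-\omega'-ML(C_\rho))t}\le1$ and the data bounds above, gives $\Vert U(t)\Vert_{\mathcal H}<\Lambda\rho<C_\rho$ on $[0,\mathcal T']$; letting $\mathcal T'\uparrow\mathcal T$ yields $\Vert U(\mathcal T)\Vert_{\mathcal H}\le\Lambda\rho<C_\rho$. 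If $\mathcal T<T_{\max}$, continuity would force $\Vert U\Vert_{\mathcal H}\le C_\rho$ on a slightly larger interval, contradicting the definition of $\mathcal T$; hence $\mathcal T=T_{\max}$ and \eqref{boundedsol2} holds on $[0,T_{\max})$. As $U$ is then bounded there, the Duhamel representation shows it is uniformly continuous and extends to $t=T_{\max}$; were $T_{\max}<+\infty$, restarting Theorem~\ref{lemma1senzabound} at $T_{\max}$ would extend $U$ further, contradicting maximality. Thus $T_{\max}=+\infty$ and the global solution satisfies \eqref{boundedsol2}.

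The hard part is precisely the apparent circularity in invoking Theorem~\ref{StabilityAbstract}, whose hypothesis \eqref{boundedsolution} is the bound one wants to prove; this is resolved by the bootstrap, whose closure rests on the \emph{strict} inequality $\Lambda\rho<C_\rho$, obtained by shrinking $\rho$ while keeping $C_\rho$ (hence $L(C_\rho)$) fixed. It is here that both the smallness of the data and the positivity of the decay exponent are essential. One could alternatively invoke Lemma~\ref{Lemma 2} and Lemma~\ref{stimaE} to keep the energy positive and comparable to $\Vert U\Vert_{\mathcal H}^2$ along the way, but the uniform-in-time bound \eqref{boundedsol2} genuinely needs the decay estimate, since the energy bound $\bar C(t)\mathcal E(0)$ of Lemma~\ref{stimaE} may grow in $t$ under \eqref{ipotesi2}.
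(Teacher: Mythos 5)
Your proof is correct, but it takes a genuinely different route from the paper's. The paper fixes a window length $T$ so large that the contraction factor $C_T$ in \eqref{CT} is $<1$, obtains the a priori bound on each window $[nT,(n+1)T]$ from the \emph{energy} machinery (Lemma \ref{stimaE} combined with Lemma \ref{Lemma 2}, giving $\frac14\Vert U(t)\Vert_{\mathcal H}^2<E(t)\le C_T^*\,\mathcal{E}(nT)$, where $C_T^*=\sup_n e^{3b^2\int_{nT}^{(n+1)T}|k(s)|\,ds}<\infty$ by \eqref{K} supplies the uniformity over windows), and invokes Theorem \ref{StabilityAbstract} only to check that the restart data at each $nT$ fall back below $\rho$ so that the scheme iterates. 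You dispense with the windows and with Lemmas \ref{stimaE} and \ref{Lemma 2} altogether and run a single continuity/bootstrap argument on the maximal interval, using Theorem \ref{StabilityAbstract} itself as the a priori estimate: since $L(C_\rho)<\frac{\omega-\omega'}{2M}$ makes the decay exponent positive, \eqref{stimaesponenziale} yields the uniform bound $\Lambda\rho$ with $\Lambda=Me^{\gamma}\bigl(1+\sqrt2\,e^{\omega\bar\tau}Kb^2\bigr)$, and the strict inequality $\Lambda\rho<C_\rho$ closes the bootstrap (note that taking $t=0$ in \eqref{ipotesi2} gives $\gamma\ge0$, so $\Lambda\ge M\ge1$ and indeed $\rho<C_\rho$, which you use to start the continuation). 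Your data estimates $\Vert U_0\Vert_{\mathcal H}<\rho$ and $\max_{r\in[-\bar\tau,0]}e^{\omega r}\Vert\tilde g(r)\Vert_{\mathcal H}<\sqrt2\,\rho$ are exactly the paper's \eqref{smallness2}, and the continuation step is legitimate because the hypothesis \eqref{boundedsolution} of Theorem \ref{StabilityAbstract} is only ever invoked on closed subintervals where it is already known to hold. Your argument is shorter and avoids the energy functional entirely for this statement; what the paper's longer route buys is the intermediate inequality \eqref{5} (positivity of $E$, the comparison $E(t)>\frac14\Vert U(t)\Vert_{\mathcal H}^2$, and $h(\Vert A^{1/2}u(t)\Vert_H)<\frac{1-\tilde\beta}{2}$), which the proof of Theorem \ref{teorema_finale} reuses to absorb the term $-\psi(u(t))$; if your proof replaces the paper's, that bound must be recovered separately, e.g.\ by additionally requiring $h\bigl(C_\rho(1-\tilde\beta)^{-1/2}\bigr)<\frac{1-\tilde\beta}{2}$ when fixing $C_\rho$, which costs nothing. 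Finally, both your proof and the paper's tacitly assume that $L(r)$ can be made smaller than $\frac{\omega-\omega'}{2M}$ by shrinking $r$ (the paper: ``eventually choosing a smaller value of $\rho$''), so you are not less rigorous on that point.
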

\begin{proof}
	Let us fix a time $T$ sufficiently large, $T\ge \bar{\tau},$ such that
	\begin{equation}\label{CT}
		C_T:=4M^2e^{2\gamma}\max\left\{(1+Kb^2e^{\omega\bar{\tau}}),\, e^{\omega\bar{\tau}}\right\}\left(1+e^{2\omega\bar{\tau}}K^2b^4\right)e^{-(\omega-\omega')T}<1.
	\end{equation}
%%Note that, being $T$ large, we can assume, without loss of generality, that $T\geq %%\bar{\tau}$.
 Also, we set 
\begin{equation}\label{C*}
	C^*_T:=\sup\left\{e^{3b^2\int_{nT}^{(n+1)T}\lvert k(s)\rvert ds}: n\in \mathbb{N}\right\}.
\end{equation}
The assumption \eqref{K} ensures that  $C^*_T<+\infty.$
Note that  $C^*_T\ge \bar{C}(T),$ where $\bar{C}(T)$ is defined in \eqref{Cbar}.
Now, we pick $\rho>0$ in such a way that 
\begin{equation}\label{rho}
	\rho\leq \frac{(1-\tilde{\beta})^{\frac{1}{2}}}{2(C^*_T)^{\frac{1}{2}}}h^{-1}\left(\frac{1-\tilde{\beta}}{2}\right).
\end{equation} 
Let $(u_0(0),u_1,\eta_0)$ and $g$ be such that \eqref{smallness} and \eqref{smallness_bis}   holds true. Then, the initial datum $\tilde{g}$  satisfies the following smallness condition:
\begin{equation}\label{smallness2}
	\max_{s\in [-\bar{\tau},0]}||\tilde g(s)||_{\mathcal H}\leq \sqrt{2} \rho.
\end{equation}
Now, from Theorem \ref{lemma1senzabound} there exists a unique local solution $U(\cdot)$ to \eqref{forma_astratta2} with the initial datum $\tilde{g}(s)$, $s\in[-\bar{\tau},0]$ which is defined on a time interval $[0,\delta)$ and that satisfies the Duhamel's formula
\begin{equation}\label{primasol}
	U(t)=S(t)U_0+\int_{0}^{t}S(t-s)[-k(s)\mathcal{B}U(s-\tau(s))+F(U(s))]\,ds,
\end{equation} 
for all $t\in [0,\delta)$. Without loss of generality, we can suppose that $\delta\leq T$. Also, we can assume that $U$ is a non-zero solution to \eqref{forma_astratta2}, since, otherwise, \eqref{boundedsol2} is trivially satisfied. Thus, using \eqref{rho}, the assumption \eqref{smallness} on the initial data,
 and recalling that $h$ is a strictly increasing function, we get
$$h(||A^{\frac{1}{2}}u_0(0)||_{H})< h\left(\frac{\rho}{(1-\tilde{\beta})^{\frac{1}{2}}}\right)\leq h\left(\frac{1}{2(C_T^*)^{\frac{1}{2}}}h^{-1}\left(\frac{1-\tilde{\beta}}{2}\right)\right)\leq h\left(h^{-1}\left(\frac{1-\tilde{\beta}}{2}\right)\right)=\frac{1-\tilde{\beta}}{2},$$
were in the above inequality we used the fact that $2(C_T^*)^{\frac{1}{2}}\geq 1$. Hence, being $h(||A^{\frac{1}{2}}u_0(0)||_{H})< \frac{1-\tilde{\beta}}{2} $, from Lemma \ref{Lemma 2} it follows that $E(0)>0$. 
\\Also, using \eqref{assumptionPsi} and the fact that $h(||A^{\frac{1}{2}}u_0(0)||_{H})< \frac{1-\tilde{\beta}}{2} $, we can write
$$\begin{array}{l}
	\vspace{0.3cm}\displaystyle{E(0)=\frac{1}{2}||u_1||_H^2+\frac{1-\tilde{\beta}}{2}||A^\frac{1}{2}u_0(0)||_H^2-\psi(u_0(0))+\frac{1}{2}\int_{-\bar\tau}^0 |k(s)|\cdot ||B^*u_t(s)||^2_H ds}\\
	\vspace{0.3cm}\displaystyle{\hspace{2cm}+\frac{1}{2}\int_0^{+\infty} \beta(s)||A^\frac{1}{2}\eta_0(s)||_H^2 ds}\\
	\vspace{0.3cm}\displaystyle{\hspace{1cm}\leq\frac{1}{2}||u_1||_H^2+\frac{1-\tilde{\beta}}{2}||A^\frac{1}{2}u_0(0)||_H^2+\frac{1}{2}h(||A^\frac{1}{2}u_0(0)||_H)||A^\frac{1}{2}u_0(0)||^2_H}\\ \vspace{0.3cm}\displaystyle{\hspace{2cm}+\frac{1}{2}\int_{-\bar\tau}^0 |k(s)|\cdot ||B^*g(s)||^2_H ds+\frac{1}{2}\int_0^{+\infty} \beta(s)||A^\frac{1}{2}\eta_0(s)||_H^2 ds}\\
	\vspace{0.3cm}\displaystyle{\hspace{1cm}\leq\frac{1}{2}||u_1||_H^2+\frac{1-\tilde{\beta}}{2}||A^\frac{1}{2}u_0(0)||_H^2+\frac{1-\tilde{\beta}}{4}||A^\frac{1}{2}u_0(0)||^2_H}\\ \vspace{0.3cm}\displaystyle{\hspace{2cm}+\frac{1}{2}\int_{-\bar\tau}^0 |k(s)|\cdot ||B^*g(s)||^2_H ds+\frac{1}{2}\int_0^{+\infty} \beta(s)||A^\frac{1}{2}\eta_0(s)||_H^2 ds}\\
	\vspace{0.3cm}\displaystyle{\hspace{1cm}=\frac{1}{2}||u_1||_H^2+\frac{3}{4}(1-\tilde{\beta})||A^\frac{1}{2}u_0(0)||_H^2+\frac{1}{2}\int_{-\bar\tau}^0 |k(s)|\cdot ||B^*g(s)||^2_H ds}\\ \displaystyle{\hspace{2cm}+\frac{1}{2}\int_0^{+\infty} \beta(s)||A^\frac{1}{2}\eta_0(s)||_H^2 ds< \rho^2.}
\end{array} 
$$
The above inequality, together with \eqref{rho}, implies that
\begin{equation}\label{2}
	h\left(\frac{2}{(1-\tilde{\beta})^{\frac{1}{2}}}(C_T^*)^{\frac{1}{2}}E^{\frac{1}{2}}(0)\right)<h\left(\frac{2}{(1-\tilde{\beta})^{\frac{1}{2}}}(C_T^*)^{\frac{1}{2}}\rho\right)\leq h\left(h^{-1}\left(\frac{1-\tilde\beta}{2}\right)\right)=\frac{1-\tilde\beta}{2}.
\end{equation}
Furthermore, from \eqref{smallness_bis}, 
\begin{equation}\label{3}
	h\left(\frac{2}{(1-\tilde{\beta})^{\frac{1}{2}}}(C_T^*)^{\frac{1}{2}}\cdot\frac{1}{\sqrt{2}}\max_{s\in [-\bar{\tau},0]}||g(s)||_{H}\right)< h\left(\frac{\sqrt{2}}{(1-\tilde{\beta})^{\frac{1}{2}}}(C_T^*)^{\frac{1}{2}}\rho\right)\le \frac{1-\tilde\beta}{2}.
\end{equation}
Then, from \eqref{2}, \eqref{3} and by definition of $\mathcal{E}(0)$, we have that
\begin{equation}\label{4}
h\left(\frac{2}{(1-\tilde{\beta})^{\frac{1}{2}}}(C_T^*)^{\frac{1}{2}}\mathcal{E}^{\frac{1}{2}}(0)\right)<\frac{1-\tilde{\beta}}{2}.
\end{equation}
Since $C_T^*\geq \bar{C}(T)$, the above inequality \eqref{4} allows us to apply Lemma \ref{Lemma 2}, that ensures that \eqref{stima E dal basso} is satisfied for all $t\in [0,\delta)$. In particular,
$$E(t)\geq \frac{1}{4}||u_t(t)||_{H}^2,\quad \forall t\in [0,\delta).$$
Combining \eqref{disuguaglianza energia} and \eqref{stima E dal basso} with $\bar{C}(t)\leq \bar{C}(T)\leq C_T^*$, it turns out that
\begin{equation}\label{5}
	\begin{array}{l}
		\vspace{0.3cm}\displaystyle{\frac{1}{4}||u_t(t)||_H^2+\frac{1-\tilde{\beta}}{4}||A^\frac{1}{2}u(t)||_H^2+\frac{1}{4}\int_{t-\bar\tau}^t |k(s)| \cdot ||B^*u_t(s)||_H^2 ds
			+\frac{1}{4}\int_0^{+\infty} \beta(s) ||A^\frac{1}{2}\eta^t (s)||_H^2 ds}\\
		\displaystyle{\hspace{6cm}<E(t)\leq C_T^*\mathcal E(0),}
\end{array}
 \end{equation}
for all $t\in [0,\delta)$. Thus, since the solution $U$ is bounded from \eqref{5}, we can extend it in $t=\delta$ and in the whole interval $[0,T]$. Moreover, \eqref{5} holds for all $t\in [0,T]$.
\\Now, using \eqref{4} and \eqref{5} with $t=T$, we can write
\begin{equation}\label{t=T}
	h(||A^{\frac{1}{2}}u_0(T)||_{H})\leq h\left(\frac{2}{(1-\tilde{\beta})^{\frac{1}{2}}}(C_T^*)^{\frac{1}{2}}\mathcal{E}^{\frac{1}{2}}(0)\right)<\frac{1-\tilde{\beta}}{2}.
\end{equation}
Furthermore, from the smallness assumption \eqref{smallness} on the initial data and from \eqref{5}, it comes that
$$\frac{1}{4}||U(t)||^2_{\mathcal H}\leq E(t)\leq C_T^*\rho^2,$$
where here we have used the fact that ${\mathcal E}(0)<\rho^2$. Thus, 
\begin{equation}\label{boundedU}
	||U(t)||_{\mathcal H} \leq C_{\rho}:=2(C_T^*)^{\frac{1}{2}}\rho.
\end{equation}
Next, eventually choosing a smaller value of $\rho$, we can suppose that $L(C_\rho)<\frac{\omega-\omega'}{2M}$. We have so proved that there exist $\rho>0$, $C_\rho>0$ such that, whenever the initial data $(u_0(0),g,\eta_0)$ satisfy the smallness condition \eqref{smallness} and \eqref{smallness_bis}, then the system \eqref{forma_astratta2} with the initial data $U_0$, $\tilde{g}$, has a unique solution $U$ defined in the time interval $[0,T]$ such that $||U(t)||_{\mathcal H} \leq C_{\rho}$. 
\\As a consequence, from Theorem \ref{StabilityAbstract}, being $L(C_{\rho})<\frac{\omega-\omega'}{2M}$, the following estimate holds
$$\Vert  U(t)\Vert_{\mathcal H}\le Me^{\gamma}\left(||U_0||_{\mathcal H}+e^{\omega\bar{\tau}}Kb^2\max_{r\in[-\bar{\tau},0]}\{e^{\omega r}||\tilde{g}(r)||_{\mathcal H}\}\right)e^{-\frac{\omega-\omega'}{2}t},$$
for all $t\in [0,T]$. Thus, using \eqref{smallness2}, we can write
$$\Vert  U(t)\Vert_{\mathcal H}\le Me^{\gamma}\left(||U_0||_{\mathcal H}+e^{\omega\bar{\tau}}Kb^2 \sqrt{2} \rho\right)e^{-\frac{\omega-\omega'}{2}t},$$
for any $t\in [0,T]$. Then, 
$$\begin{array}{l}
	\vspace{0.3cm}\displaystyle{\Vert  U(t)\Vert^2_{\mathcal H}\le M^2e^{2\gamma}\left(||U_0||_{\mathcal H}+e^{\omega\bar{\tau}}Kb^2 \sqrt{2} \rho\right)^2e^{-(\omega-\omega')t}}\\
	\vspace{0.3cm}\displaystyle{\hspace{1.5cm}\leq2M^2e^{2\gamma}\left(||U_0||^2_{\mathcal H}+(e^{\omega\bar{\tau}}Kb^2 \sqrt{2} \rho )^2\right)e^{-(\omega-\omega')t}}\\
	\displaystyle{\hspace{1.5cm}=2M^2e^{2\gamma}\left(||U_0||^2_{\mathcal H}+e^{2\omega\bar{\tau}}K^2b^4 2\rho^2\right)e^{-(\omega-\omega')t},}
\end{array}$$
from which, taking into account \eqref{smallness2} with $U_0=\tilde{g}(0)$,
\begin{equation}\label{6}
	\Vert  U(t)\Vert^2_{\mathcal H}\le 4M^2e^{2\gamma}\rho^2\left(1+e^{2\omega\bar{\tau}}K^2b^4\right)e^{-(\omega-\omega')t},
\end{equation}
for all $t\in [0,T]$.
\\Also, for all $s\in [T-\bar{\tau},T]\subseteq [0,T]$, \eqref{6} yields
$$\begin{array}{l}
	\vspace{0.3cm}\displaystyle{||u_t(s)||_H^2\leq \Vert  U(s)\Vert^2_{\mathcal H}}\\
	\vspace{0.3cm}\displaystyle{\hspace{2.1cm}\leq 4M^2e^{2\gamma}\rho^2\left(1+e^{2\omega\bar{\tau}}K^2b^4\right)e^{-(\omega-\omega')s}}\\
	\vspace{0.3cm}\displaystyle{\hspace{2.1cm}\leq 4M^2e^{2\gamma}\rho^2\left(1+e^{2\omega\bar{\tau}}K^2b^4\right)e^{-(\omega-\omega')(T-\bar{\tau})}}\\
	\displaystyle{\hspace{2.1cm}\leq 4M^2e^{2\gamma}e^{\omega\bar{\tau}}\rho^2\left(1+e^{2\omega\bar{\tau}}K^2b^4\right)e^{-(\omega-\omega')T}.}
\end{array}$$
As a consequence, from \eqref{K}
$$\begin{array}{l}
	\vspace{0.3cm}\displaystyle{\int_{T-\bar{\tau}}^{T}|k(s)| \cdot ||B^*u_t(s)||_H^2 ds\leq 4M^2e^{2\gamma}e^{\omega\bar{\tau}}b^2\rho^2\left(1+e^{2\omega\bar{\tau}}K^2b^4\right)e^{-(\omega-\omega')T}\int_{T-\bar{\tau}}^{T}|k(s)|ds}\\
	\displaystyle{\hspace{3cm}\leq 4KM^2e^{2\gamma}e^{\omega\bar{\tau}}b^2\rho^2 \left(1+e^{2\omega\bar{\tau}}K^2b^4\right)e^{-(\omega-\omega')T}.}
\end{array}$$
This last fact together with \eqref{6} implies that
 \begin{equation}\label{7}
	\begin{array}{l}
		\vspace{0.3cm}\displaystyle{\Vert  U(T)\Vert^2_{\mathcal H}+\int_{T-\bar{\tau}}^{T}|k(s)| \cdot ||B^*u_t(s)||_H^2 ds}\\
		\displaystyle{\hspace{1cm}\leq 4M^2e^{2\gamma}\rho^2 \left(1+e^{2\omega\bar{\tau}}K^2b^4\right)\left(1+Kb^2e^{\omega\bar{\tau}}\right)e^{-(\omega-\omega')T}\leq C_T\rho^2<\rho^2.}
	\end{array}
\end{equation}
Moreover, \eqref{6} implies that
$$\max_{s\in [T-\bar{\tau},T]}||u_t(s)||_H^2\leq 4 M^2e^{2\gamma}e^{\omega\bar{\tau}}\rho^2 \left(1+e^{2\omega\bar{\tau}}K^2b^4\right)e^{-(\omega-\omega')T}\leq C_T\rho^2<\rho^2,$$
from which 
\begin{equation}\label{8}
	\max_{s\in [T-\bar{\tau},T]}||u_t(s)||_H<\rho.
\end{equation}
The conditions \eqref{7} and \eqref{8} allow us to apply the same arguments employed before on the interval $[T,2T]$. Namely, we consider  the initial value problem
\begin{equation}\label{modello2}
	\begin{array}{l}
		\displaystyle{V'(t)=\mathcal{A}V(t)-k(t)\mathcal{B}V(t-\tau(t))+F(V(t)),\quad t\in [T,2T],}\\
		\displaystyle{V(s)=U(s),\quad s\in [T-\bar{\tau},T],}
	\end{array}
\end{equation}
where $U(\cdot)$ is the solution to \eqref{forma_astratta2} in the interval $[0,T]$.
\\We define now the energy of the solution
\begin{equation}\label{energy}
	\begin{array}{l}
		\vspace{0.3cm}\displaystyle{\tilde{E}(t):=\tilde{E} (v(t))=\frac{1}{2}||v_t(t)||_H^2+\frac{1-\tilde{\beta}}{2}||A^{\frac 12}v(t)||^2_H-\psi(v)  }\\ \hspace{2 cm}
		\displaystyle{ +\frac{1}{2}\int_0^{+\infty} \beta(s) ||A^{\frac 1 2} \eta^t(s)||^2_H ds +\frac{1}{2}\int_{t-\bar\tau}^t |k(s)|\cdot ||B^*v_t(s)||_H^2 ds,}
	\end{array}
\end{equation}
where $\eta^t(s)=v(t)-v(t-s)$, and the functional
\begin{equation}\label{funzionale}
	\tilde{\mathcal{E}}(t):=\max\left\{\frac{1}{2}\max_{s\in [T-\bar{\tau},T]}\Vert u_t(s)\Vert_H^2,\,\,\max_{s\in[T,t]}\tilde{E}(s)\right\}.
\end{equation}
Let us note that $\tilde{E}(T)=E(T)$. 
\\Now, from Theorem \ref{lemma1senzabound}, the problem \eqref{modello2} with the initial datum $U(s)$, $s\in [T-\bar\tau,T]$, has a unique local solution $V(\cdot)$ defined on a time interval $[T,T+\delta)$ given by the Duhamel's formula
\begin{equation}\label{secondasol}
	V(t)=S(t-T)U(T)+\int_{0}^{t}S(t-T-s)[-k(s)\mathcal{B}V(s-\tau(s))+F(V(s))]\, ds,
\end{equation} 
for all $t\in [T,T+\delta)$. We can assume that $\delta\leq \bar{\tau}$ and that $V$ is a non-zero solution, since, otherwise, \eqref{boundedsol2} is obviously satisfied for all $t\geq T$.
\\First of all, the inequality \eqref{t=T} yields $\tilde{E}(T)>0$.
\\Let us note that, if $\tilde{E}(t)\geq \frac{1}{4}\lVert v_t(t)\rVert^2_{H}$, for all $t\in [T,T+\delta)$, arguing as in Lemma \ref{stimaE},
\begin{equation}\label{stimaEmodificata}
	\tilde{E}(t)\leq e^{3b^2\int_{T}^{t}\lvert k(s)\rvert ds}\,\tilde{\mathcal{E}}(T),\quad \forall t\in [T,T+\delta).
\end{equation}
Also, using the same arguments employed in Lemma \ref{Lemma 2} and observing that
$$C^*_T\ge e^{3b^2\int_T^{2T}\vert k(s)\vert ds },$$ 
if
\begin{equation}\label{h3}
	h\left(\frac{2}{(1-\tilde{\beta})^{\frac{1}{2}}}(C^*_T)^{\frac{1}{2}}\tilde{\mathcal{E}}^{\frac{1}{2}}(T)\right)<\frac{1-\tilde{\beta}}{2},
\end{equation}
 then 
\begin{equation}\label{stima E dal bassomodificata}
	\begin{array}{l}
		\vspace{0.3cm}\displaystyle{ \tilde{E}(t)>\frac{1}{4}||v_t(t)||_H^2+\frac{1-\tilde{\beta}}{4}||A^\frac{1}{2}v(t)||_H^2+\frac{1}{4}\int_{t-\bar\tau}^t |k(s)| \cdot ||B^*v_t(s)||_H^2 ds}\\
		\hspace{1.5 cm}
		\displaystyle{ +\frac{1}{4}\int_0^{+\infty} \beta(s) ||A^\frac{1}{2}\eta^t (s)||_H^2 ds,}
	\end{array}
\end{equation}
for all $t\in[T,T+ \delta)$. In particular,
\begin{equation}\label{J2modificata}
\tilde{E}(t)> \frac 14 \Vert V(t)\Vert_{\mathcal H}^2, \quad \forall t\in [T, T+\delta).
\end{equation}
Now, from \eqref{7}, we have $\tilde{E}(T)<\rho^2$. This together with \eqref{8} implies that
\begin{equation}\label{E<rho}
	\tilde{\mathcal{E}}(T)<\rho^2.
\end{equation}
Hence, using \eqref{rho} we get
 \begin{equation}\label{9}
	h\left(\frac{2}{(1-\tilde{\beta})^{\frac{1}{2}}}(C^*_T)^{\frac{1}{2}}\tilde{\mathcal{E}}(T)^{\frac{1}{2}}\right)<h\left(\frac{2}{(1-\tilde{\beta})^{\frac{1}{2}}}(C^*_T)^{\frac{1}{2}}\rho\right)<\frac{1-\tilde{\beta}}{2}.
\end{equation}
So, \eqref{h3} is satisfied. As a consequence, inequalities \eqref{stima E dal bassomodificata} and \eqref{J2modificata} hold for all $t\in [T,T+\delta)$. In particular, from \eqref{J2modificata} we get $$\tilde E(t)\geq \frac{1}{4}\lVert v_t(t)\rVert_H^2,\quad \forall t\in [T,T+\delta).$$
Therefore, also inequality \eqref{stimaEmodificata} is fulfilled. Combining \eqref{stimaEmodificata} and \eqref{stima E dal bassomodificata}, we finally get
\begin{equation}\label{solb}
	\begin{array}{l}
		\vspace{0.2cm}\displaystyle{\frac{1}{4}||v_t(t)||_H^2+\frac{1-\tilde{\beta}}{4}||A^\frac{1}{2}v(t)||_H^2+\frac{1}{4}\int_{t-\bar\tau}^t |k(s)| \cdot ||B^*v_t(s)||_H^2 ds+\frac{1}{4}\int_0^{+\infty} \beta(s) ||A^\frac{1}{2}\eta^t (s)||_H^2 ds}\\
		\displaystyle{\hspace{3cm}<\tilde{E}(t)\leq e^{3b^2\int_{T}^{2T}\lvert k(s)\rvert ds}\tilde {\mathcal{E}}(T)\leq C^*_T\tilde{\mathcal{E}}(T).}
	\end{array}
\end{equation}
Thus, the solution $V$ is bounded and we can extend it in $t=T+\delta$ and in the whole interval $[T,2T]$. Moreover, \eqref{solb} and \eqref{J2modificata} hold true for all $t\in [T,2T]$. So, taking into account of \eqref{E<rho}, it holds
\begin{equation}\label{boundedV}
	\lVert V(t)\rVert_{\mathcal{H}}\leq 2(C_T^*)^{\frac{1}{2}}\tilde{\mathcal{E}}^{\frac{1}{2}}(T)\leq 2(C_T^*)^{\frac{1}{2}}\rho=C_\rho.
\end{equation}
Putting together the two partial solutions \eqref{primasol} and \eqref{secondasol} to \eqref{forma_astratta2} obtained in the time intervals $[0,T]$ and $[T,2T]$, respectively, we get the existence of a unique solution $U\in C([0,2T];\mathcal{H})$ to \eqref{forma_astratta2} that is defined in the time interval $[0,2T]$ and that satisfies the Duhamel's formula \eqref{primasol}, for all $t\in [0,2T]$. Moreover, from \eqref{boundedU} and \eqref{boundedV}, the solution $U$ satisfies \eqref{boundedsolution} with $C=C_\rho$. Thus, since $L(C_\rho)<\frac{\omega-\omega'}{2M}$, the exponential decay estimate \eqref{stimaesponenziale} is satisfied by the solution $U$, i.e. $$\Vert  U(t)\Vert_{\mathcal H}\le Me^{\gamma}\left(||U_0||_{\mathcal H}+e^{\omega\bar{\tau}}Kb^2\max_{r\in[-\bar{\tau},0]}\{e^{\omega r}||\tilde{g}(r)||_{\mathcal H}\}\right)e^{-\frac{\omega-\omega'}{2}t},\quad\forall t\in [0,2T].$$
Again, we deduce that \eqref{6} holds for all $t\in [0,2T]$. Furthermore, for all $s\in [2T-\bar{\tau},2T],$
$$\begin{array}{l}
	\vspace{0.3cm}\displaystyle{||u_t(s)||_H^2 \leq ||U(s)||_{\mathcal{H}}^2\leq  4M^2e^{2\gamma}e^{\omega\bar{\tau}}\rho^2\left(1+e^{2\omega\bar{\tau}}K^2b^4\right)e^{-(\omega-\omega')2T}.}
\end{array}$$
As a consequence,
\begin{equation}\label{10}
\begin{array}{l}
		\vspace{0.3cm}\displaystyle{\Vert  U(2T)\Vert^2_{\mathcal H}+\int_{2T-\bar{\tau}}^{2T}|k(s)| \cdot ||B^*u_t(s)||_H^2 ds}\\
		\vspace{0.3cm}\displaystyle{\hspace{1cm}\leq 4 M^2e^{2\gamma}\rho^2(1+e^{2\omega \bar{\tau}}K^2b^4)e^{-(\omega-\omega')2T}}\\
		\vspace{0.3cm}\displaystyle{\hspace{2cm}+4 M^2e^{2\gamma}e^{\omega\bar{\tau}}Kb^2\rho^2 (1+e^{2\omega \bar{\tau}}K^2b^4)e^{-(\omega-\omega')2T}}\\
		\vspace{0.3cm}\displaystyle{\hspace{1cm}\leq 4 M^2e^{2\gamma}\rho^2 (1+e^{2\omega \bar{\tau}}K^2b^4)(1+Kb^2e^{\omega\bar{\tau}})e^{-(\omega-\omega')2T}}\\
		\vspace{0.3cm}\displaystyle{\hspace{1cm}\leq 4 M^2e^{2\gamma}\rho^2 (1+e^{2\omega \bar{\tau}}K^2b^4)(1+Kb^2e^{\omega\bar{\tau}})e^{-(\omega-\omega')T}}\\
		\displaystyle{\hspace{1cm}\leq C_T\rho^2<\rho^2.}
	\end{array}
\end{equation}
Also,
$$\begin{array}{l}
	\vspace{0.3cm}\displaystyle{\max_{s\in [2T-\bar{\tau},2T]}||u_t(s)||_H^2\leq 4M^2e^{2\gamma}e^{\omega\bar{\tau}}\rho^2 \left(1+e^{2\omega\bar{\tau}}K^2b^4\right)e^{-(\omega-\omega')2T}}\\
	\displaystyle{\hspace{2cm}\leq 4M^2e^{2\gamma}e^{\omega\bar{\tau}}\rho^2 \left(1+e^{2\omega\bar{\tau}}K^2b^4\right)e^{-(\omega-\omega')T}\leq C_T\rho^2<\rho^2,}
\end{array}$$
from which 
\begin{equation}\label{12}
	\max_{s\in [2T-\bar{\tau},2T]}||u_t(s)||_H<\rho.
\end{equation}
The conditions \eqref{10} and \eqref{12} allows us to apply the same arguments employed before on the interval $[2T,3T]$, obtaining the existence of a unique solution to \eqref{forma_astratta2} defined in the time interval $[0,3T]$ and that satisfies \eqref{boundedsol2} for all $t\in [0,3T]$. Iterating this procedure, we finally get the existence of a unique global solution $U\in C([0,+\infty);\mathcal{H})$ to \eqref{forma_astratta2} with the initial datum $\tilde{g}(s)$, $s\in [-\bar{\tau},0]$, that satisfies \eqref{boundedsol2}.
\end{proof}	
We have then proved that, for suitably small initial data, solutions to problem  \eqref{forma_astratta2} are globally defined and bounded by a positive constant $C_{\rho}$ that satisfies $L_{C_{\rho}}<\frac{\omega-\omega'}{M}$ (see Theorem \ref{teorema2.5}). Thus, solutions to problem  \eqref{forma_astratta2} satisfy the exponential decay estimate \eqref{stimaesponenziale}. Therefore, we are ready to prove  the energy decay
for model \eqref{modelloDafermos}.

\begin{Theorem}\label{teorema_finale}
Let us consider model \eqref{modelloDafermos} and assume \eqref{ipotesi2}. There exists $\rho>0$ such that,
if the following smallness conditions on  the initial data are satisfied:
\begin{equation}\label{ipotesi}
\begin{array}{l}
	\displaystyle{||u_1||^2_{H}+(1-\tilde{\beta})||A^{\frac{1}{2}}u_0(0)||^2_{H}+\int_{-\bar\tau}^0|k(s)|\cdot||B^*g(s)||^2_{H}}\\
	\displaystyle{\hspace{4cm}+\int_{0}^{+\infty}\beta(s)||A^{\frac{1}{2}}\eta_0(s)||^2_{H}ds<\rho^2,}
\end{array}
\end{equation}
and
\begin{equation}\label{ipotesi_bis}
\displaystyle{\max_{s\in [-\bar{\tau},0]}||g(s)||_{H}<\rho,}
\end{equation}
then \eqref{modelloDafermos} has a unique solution  globally defined. Moreover, the energy satisfies the exponential decay estimate
\begin{equation}
\label{tesi}
E(t)\leq \tilde{C}e^{-\mu t},
\end{equation}
where $\mu:=\omega-\omega'$ and $\tilde C$ is a constant depending on the initial data.
\end{Theorem}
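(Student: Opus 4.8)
The plan is to combine the global existence and boundedness furnished by Theorem \ref{teorema2.5} with the abstract decay estimate of Theorem \ref{StabilityAbstract}, and then to transfer the decay of the phase-space norm $\Vert U(t)\Vert_{\mathcal H}$ to the energy $E(t)$. First I would fix $\rho$ as in Theorem \ref{teorema2.5}, so that under the smallness conditions \eqref{ipotesi} and \eqref{ipotesi_bis} (which coincide with \eqref{smallness} and \eqref{smallness_bis}) there exists a unique global solution $U=(u,u_t,\eta^t)\in C([0,+\infty);\mathcal H)$ with $\Vert U(t)\Vert_{\mathcal H}\le C_\rho$ for all $t\ge 0$ and $L(C_\rho)<\frac{\omega-\omega'}{2M}$. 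Since this last inequality in particular gives $L(C_\rho)<\frac{\omega-\omega'}{M}$, the hypothesis of Theorem \ref{StabilityAbstract} holds on every interval $[0,T]$, and letting $T\to+\infty$ yields, because $ML(C_\rho)<\frac{\omega-\omega'}{2}$ forces the decay exponent $\omega-\omega'-ML(C_\rho)$ to exceed $\frac{\omega-\omega'}{2}$, the estimate
\begin{equation*}
\Vert U(t)\Vert_{\mathcal H}\le Me^{\gamma}\Big(\Vert U_0\Vert_{\mathcal H}+e^{\omega\bar\tau}Kb^2\max_{r\in[-\bar\tau,0]}\{e^{\omega r}\Vert\tilde g(r)\Vert_{\mathcal H}\}\Big)e^{-\frac{\omega-\omega'}{2}t},\qquad \forall t\ge 0.
\end{equation*}
Squaring, I obtain a constant $C_0>0$ depending on the initial data with $\Vert U(t)\Vert_{\mathcal H}^2\le C_0\, e^{-(\omega-\omega')t}$ for all $t\ge 0$; this is precisely where the rate $\mu=\omega-\omega'$ of \eqref{tesi} comes from.

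Next I would bound $E(t)$ from above by $\Vert U(t)\Vert_{\mathcal H}^2$, up to the delay term. From the definition \eqref{energia1}, using the estimate \eqref{assumptionPsi} for $\psi$ together with the bound $h(\Vert A^{\frac12}u(t)\Vert_H)<\frac{1-\tilde\beta}{2}$ (which holds along the global solution, exactly as established in the proof of Theorem \ref{teorema2.5}), one gets $|\psi(u(t))|\le\frac{1-\tilde\beta}{4}\Vert A^{\frac12}u(t)\Vert_H^2$, whence
\begin{equation*}
E(t)\le \tfrac34\,\Vert U(t)\Vert_{\mathcal H}^2+\tfrac12\int_{t-\bar\tau}^t |k(s)|\cdot\Vert B^*u_t(s)\Vert_H^2\,ds.
\end{equation*}
The first term already decays like $e^{-(\omega-\omega')t}$ by the previous step, so the whole matter reduces to controlling the delay integral.

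The main point, and the only genuinely technical step, is the exponential decay of the memory/delay term $\frac12\int_{t-\bar\tau}^t|k(s)|\,\Vert B^*u_t(s)\Vert_H^2\,ds$. For $t\ge\bar\tau$ I would use $\Vert B^*u_t(s)\Vert_H^2\le b^2\Vert U(s)\Vert_{\mathcal H}^2\le b^2 C_0\, e^{-(\omega-\omega')s}$, together with the elementary observation that on the window $s\in[t-\bar\tau,t]$ one has $e^{-(\omega-\omega')s}\le e^{(\omega-\omega')\bar\tau}e^{-(\omega-\omega')t}$; combining this with \eqref{K} gives
\begin{equation*}
\tfrac12\int_{t-\bar\tau}^t|k(s)|\cdot\Vert B^*u_t(s)\Vert_H^2\,ds\le \tfrac12\, b^2 C_0\, K\, e^{(\omega-\omega')\bar\tau}\,e^{-(\omega-\omega')t}.
\end{equation*}
For $0\le t<\bar\tau$ the window $[t-\bar\tau,t]$ reaches into $[-\bar\tau,0]$, where $u_t(s)=g(s)$; on this bounded range I would simply bound the integral by a constant, using the smallness of the data, \eqref{K}, and $e^{-(\omega-\omega')t}\ge e^{-(\omega-\omega')\bar\tau}$, and absorb it into $\tilde C$. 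Putting the three estimates together produces $E(t)\le \tilde C\, e^{-(\omega-\omega')t}$ with $\tilde C$ depending on the initial data, which is exactly \eqref{tesi} with $\mu=\omega-\omega'$. I expect the bookkeeping of the short-time window $t<\bar\tau$ and the clean extraction of the factor $e^{(\omega-\omega')\bar\tau}$ from the delay integral to be the only places requiring care; everything else follows directly from the already established Theorems \ref{StabilityAbstract} and \ref{teorema2.5}.
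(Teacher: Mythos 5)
Your proposal is correct and follows essentially the same route as the paper: invoke Theorem \ref{teorema2.5} for global existence and the bound $\Vert U(t)\Vert_{\mathcal H}\le C_\rho$ with $L(C_\rho)<\frac{\omega-\omega'}{2M}$, apply Theorem \ref{StabilityAbstract} to get $\Vert U(t)\Vert_{\mathcal H}\le \hat C e^{-\frac{\omega-\omega'}{2}t}$, bound $E(t)$ by $\Vert U(t)\Vert_{\mathcal H}^2$ plus the delay integral via \eqref{assumptionPsi}, and control that integral with \eqref{K} and the decay of $\Vert U\Vert_{\mathcal H}$ on the window $[t-\bar\tau,t]$. The only (harmless) differences are constants ($\tfrac34$ versus $1$, $e^{(\omega-\omega')\bar\tau}$ versus $e^{\omega\bar\tau}$) and that you explicitly treat the initial window $t<\bar\tau$, which the paper leaves implicit by stating its final estimate only for $t\ge\bar\tau$.
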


\begin{proof}
Let $\rho>0$ be the positive constant in Theorem \ref{teorema2.5}. Let us consider initial data for which the smallness conditions \eqref{ipotesi} and   \eqref{ipotesi_bis} hold true. Then, from Theorem \ref{teorema2.5}, the problem \eqref{forma_astratta2} has a unique global $U(\cdot)$ solution that satisfies $||U(t)||_\mathcal{H}<C_{\rho}$, with $L(C_{\rho})<\frac{\omega-\omega'}{2M},$ and the exponential decay estimate \eqref{stimaesponenziale}. From \eqref{assumptionPsi} and \eqref{5}, we have that
$$
\begin{array}{l}
	\vspace{0.3cm}\displaystyle{E(t)=\frac{1}{2}||u_t(t)||_H^2+\frac{1-\tilde{\beta}}{2}||A^{\frac 12}u(t)||^2_H-\psi(u(t))  +\frac{1}{2}\int_0^{+\infty} \beta(s) ||A^{\frac 1 2} \eta^t(s)||^2_H ds}\\
	\vspace{0.3cm}\displaystyle{\hspace{1.5cm}+\frac{1}{2}\int_{t-\bar\tau}^t |k(s)|\cdot ||B^*u_t(s)||_H^2 ds}\\
	\vspace{0.3cm}\displaystyle{\hspace{1cm}\leq\frac{1}{2}||u_t(t)||_H^2+\frac{1-\tilde{\beta}}{2}||A^{\frac 12}u(t)||^2_H+\frac{1-\tilde{\beta}}{4}||A^{\frac 12}u(t)||^2_H +\frac{1}{2}\int_0^{+\infty} \beta(s) ||A^{\frac 1 2} \eta^t(s)||^2_H ds}\\
	\vspace{0.3cm}\displaystyle{\hspace{1.5cm}+\frac{b^2}{2}\int_{t-\bar\tau}^t |k(s)|\cdot ||u_t(s)||_H^2 ds}\\
	\vspace{0.3cm}\displaystyle{\hspace{1cm}\leq||U(t)||_\mathcal{H}^2+\frac{b^2}{2}\int_{t-\bar\tau}^t |k(s)|\cdot ||U(s)||_{\mathcal{H}}^2 ds}
\end{array} 
$$
for any $t\geq \bar{\tau}$. Now, applying Theorem \ref{StabilityAbstract}, there exists a positive constant $\hat{C}$ such that
\begin{equation}\label{11}
||U(t)||_\mathcal{H} \leq \hat{C}e^{-\frac{\omega-\omega'}{2} t},\quad\forall t\geq 0,
\end{equation}
where here we have used the fact that $L(C_{\rho})<\frac{\omega-\omega'}{2M}$.
Thus,
$$E(t)\leq \hat{C}^2
e^{-(\omega-\omega') t}+\frac{\hat{C}^2b^2}{2}\int_{t-\bar{\tau}}^{t}|k(s)|e^{-(\omega-\omega')s}ds\leq \hat{C}^2e^{-(\omega-\omega') t}+\frac{\hat{C}^2b^2}{2}Ke^{\omega\bar{\tau}}e^{-(\omega-\omega')t},$$for all $t\geq \bar{\tau}$. Setting
$$\tilde{C}:=\max\left\{\hat{C}^2,\frac{\hat{C}^2b^2}{2}Ke^{\omega\bar{\tau}}\right\},$$
we can write
$$E(t)\leq \tilde{C}e^{-(\omega-\omega')t},$$
from which, 
$$E(t)\leq \tilde{C}e^{-\mu t}, \quad\forall t\geq \bar{\tau},$$
where $\mu =\omega-\omega'$. Hence, \eqref{tesi} holds true for all $t\geq \bar{\tau}$.
\end{proof}

\section{Examples}

\setcounter{equation}{0}

\subsection{The wave equation with memory and source term}\label{ex1}
Let $\Omega$ be a non-empty bounded set in $\RR^n$, with boundary $\Gamma$ of class $C^2,$ and  let $\mathcal{O}\subset \Omega$ be a nonempty open subset of $\Omega$. We assume $n\ge 3.$ The lower dimension cases could be studied analogously. We consider the following wave equation:
\begin{equation}
\label{memory+source}
\begin{array}{l}
\displaystyle{u_{tt}(x,t)-\Delta u(x,t)+\int_0^{+\infty} \beta(s)\Delta u(x,t-s) ds+k(t)\chi _{\mathcal{O}} u_t(x,t-\tau(t))}\\
\hspace{7 cm}
\displaystyle{=|u(x,t)|^\sigma u(x,t), \qquad \text{in} \ \Omega \times (0,+\infty),}\\
\displaystyle{ u(x,t)=0, \qquad \text{in} \ \Gamma \times (0,+\infty),}\\
\displaystyle{ u(x,t)=u^0(x,t) \qquad \text{in} \ \Omega \times (-\infty,0],}\\
\displaystyle{ u_t(x,0)=u_1(x), \qquad \text{in} \ \Omega,} \\
\displaystyle{ u_t(x,t)=g(x,t), \qquad \text{in} \ \Omega \times [-\bar\tau,0],}
\end{array}
\end{equation}
where the time delay function $\tau (\cdot)$ satisfies \eqref{tau_bounded}, $\beta :(0,+\infty)\rightarrow (0,+\infty)$ is a locally absolutely continuous memory kernel satisfying the assumptions (i)-(iv), $\sigma >0$ and the damping coefficient $k(\cdot)$ is a function in $L^1_{loc}([-\bar{\tau},+\infty))$ for which \eqref{K} holds.
Then, system \eqref{memory+source} falls in the form \eqref{modello} with $A=-\Delta$ and $D(A)=H^2(\Omega)\cap H^1_0(\Omega).$ Moreover, $D(A^{\frac{1}{2}})=H^1_0(\Omega).$ 
\\Let us define $\eta_s^t$ as in \eqref{eta}. Then, system \eqref{memory+source} can be rewritten as follows:
\begin{equation}
\label{memory+source_riscritta}
\begin{array}{l}
\displaystyle{ u_{tt}(x,t)-(1-\tilde{\beta})\Delta u(x,t)-\int_0^{+\infty} \beta(s)\Delta \eta^t(x,s) ds+k(t)\chi _{\mathcal{O}} u_t(x,t-\tau)}\\
\hspace{7 cm}
\displaystyle{=|u(x,t)|^\sigma u(x,t), \quad \text{in} \ \Omega\times (0,+\infty),}\\
\displaystyle{\eta_t^t(x,s)=-\eta_s^t(x,s)+u_t(x,t), \qquad \text{in} \ \Omega\times (0,+\infty)\times (0,+\infty),}\\
\displaystyle{ u(x,t)=0, \qquad \text{in} \ \Gamma \times (0,+\infty),}\\
\displaystyle{ \eta^t(x,s)=0, \qquad \text{in} \ \Gamma \times (0,+\infty), \quad \text{for} \ t\geq 0,}\\
\displaystyle{ u(x,0)=u_0(x):=u^0(x,0), \qquad \text{in} \ \Omega,}\\
\displaystyle{ u_t(x,0)=u_1(x):=\frac{\partial u^0}{\partial t}(x,t)\Bigr| _{t=0}, \qquad \text{in} \ \Omega,}\\
\displaystyle{ \eta^0(x,s)=\eta_0(x,s):=u^0(x,0)-u^0(x,-s), \qquad \text{in} \ \Omega\times (0,+\infty),}\\
\displaystyle{ u_t(x,t)=g(x,t), \qquad \text{in} \ \Omega\times [-\bar\tau,0].}

\end{array}
\end{equation}
In order to reformulate \eqref{memory+source_riscritta} as an abstract first order equation, we introduce the Hilbert space $L^2_\beta ((0,+\infty);H^1_0(\Omega))$ endowed with the inner product
$$
\langle \phi , \varphi\rangle _{L^2_\beta ((0,+\infty);H^1_0(\Omega))} := \int_{\Omega} \left( \int_0^{+\infty} \beta(s) \nabla \phi (x,s) \nabla \varphi (x,s) ds \right) dx,
$$
and the Hilbert space
$$
\mathcal{H}=H_0^1(\Omega)\times L^2(\Omega) \times L^2_\beta ((0,+\infty);H^1_0(\Omega)),
$$
equipped with the inner product
\begin{equation*}
\left\langle
\left (
\begin{array}{l}
u\\
v\\
w
\end{array}
\right ),
\left (
\begin{array}{l}
\tilde u\\
\tilde v\\
\tilde w
\end{array}
\right )
\right\rangle_{\mathcal{H}}:=(1-\tilde{\beta}) \int_{\Omega} \nabla u \nabla \tilde u dx +\int_{\Omega} v\tilde v dx+ \int_{\Omega} \int_0^{+\infty} \beta(s) \nabla w \nabla \tilde w ds dx.
\end{equation*}
We set $U=(u,u_t,\eta^t)$. Then, \eqref{memory+source_riscritta} can be rewritten in the form \eqref{forma_astratta2}, where
$$
\mathcal{A} \begin{pmatrix}
u\\
v\\
w
\end{pmatrix}
=
\begin{pmatrix}
v\\
(1-\tilde{\beta})\Delta u+\int_0^{+\infty} \beta(s) \Delta w(s) ds \\
-w_s +v
\end{pmatrix},
$$
with domain
\begin{eqnarray*}
{D(\mathcal{A})}&=&\{  (u,v,w)\in H_0^1(\Omega)\times H_0^1(\Omega)\times L^2_\beta ((0,+\infty);H^1_0(\Omega)): \\
& & (1-\tilde{\beta})u +\int_0^{+\infty} \beta(s)w(s)ds \in H^2(\Omega)\cap H_0^1(\Omega), \ w_s\in L^2_\beta ((0,+\infty);H^1_0(\Omega))\} ,
\end{eqnarray*}
$\mathcal{B}(u,v,\eta^t)^T := (0,\chi _{\mathcal{O}} v,0)^T$, $\tilde{g}=(u_0,g,\eta^0)^T$, in $[-\bar{\tau},0]$, and $F(U(t))=(0,|u(t)|^{\sigma}u(t), 0)^T$. 
\\Now, we consider the functional 
$$
\psi (u):= \frac{1}{\sigma +2} \int_{\Omega} |u(x)|^{\sigma+2} dx, \quad \forall u\in H^1_0(\Omega).
$$
By Sobolev's embedding theorem, $\psi$ is well-defined for $0<\sigma\leq\frac{4}{n-2}$. Also, the G\^ateaux derivative of $\psi$ at any point $u\in H_0^1(\Omega)$ is
$$
D\psi (u)(v)=\int_{\Omega} |u(x)|^\sigma u(x)v(x)dx,
$$
for all $v\in H_0^1(\Omega)$. Moreover, as in \cite{ACS}, if $0<\sigma\le \frac{2}{n-2}$, then $\psi$ satisfies the assumptions (H1), (H2), (H3). 
\\Let us define the energy in this way:
$$
\begin{array}{l}
\displaystyle{E(t):= \frac{1}{2}\int_{\Omega} |u_t(x,t)|^2 dx +\frac{1-\tilde{\beta}}{2}\int_{\Omega} |\nabla u(x,t)|^2 dx -\psi(u(x,t))}\\
\hspace{2 cm}
\displaystyle{ +\frac{1}{2}\int_{t-\bar\tau}^t \int_{\mathcal{O}} |k(s)|\cdot |u_t(x,s)|^2 dx ds +\frac{1}{2}\int_0^{+\infty} \beta (s)\int_{\Omega} |\nabla \eta^t(x,s)|^2 dx ds.}
\end{array}
$$
Then, from Theorem \ref{teorema_finale} we have that \eqref{memory+source} is well-posed and that, for solutions corresponding to suitably small initial data, an exponential decay estimate holds provided that the condition \eqref{ipotesi2} is satisfied.

\subsection{The wave equation with memory and integral source term}

Let $\Omega$ be a non-empty bounded set in $\RR^n$, with boundary $\Gamma$ of class $C^2$. Moreover, let $\mathcal{O}\subset \Omega$ be a nonempty open subset of $\Omega$. We consider the following wave equation:
\begin{equation}
	\label{memory+integral}
	\begin{array}{l}
		\displaystyle{u_{tt}(x,t)-\Delta u(x,t)+\int_0^{+\infty} \beta(s)\Delta u(x,t-s) ds+k(t)\chi _{\mathcal{O}} u_t(x,t-\tau(t))}\\
		\hspace{7 cm}
		\displaystyle{=\left(\int_{\Omega}|u(x,t)|^2\right)^\frac{p}{2} u(x,t), \qquad \text{in} \ \Omega \times (0,+\infty),}\\
		\displaystyle{ u(x,t)=0, \qquad \text{in} \ \Gamma \times (0,+\infty),}\\
		\displaystyle{ u(x,t)=u_0(x,t) \qquad \text{in} \ \Omega \times (-\infty,0],}\\
		\displaystyle{ u_t(x,0)=u_1(x), \qquad \text{in} \ \Omega,} \\
		\displaystyle{ u_t(x,t)=g(x,t), \qquad \text{in} \ \Omega \times [-\bar\tau,0],}
	\end{array}
\end{equation}
where the time delay function $\tau (\cdot)$ satisfies \eqref{tau_bounded}, $\beta :(0,+\infty)\rightarrow (0,+\infty)$ is a locally absolutely continuous memory kernel such that the assumptions (i)-(iv) are fulfilled, $p\geq 1,$ and the damping coefficient $k(\cdot)$ is a function in $L^1_{loc}([-\bar{\tau},+\infty))$ for which \eqref{K} holds true.
Then, system \eqref{memory+integral} falls in the form \eqref{modello} with $A=-\Delta$, $D(A)=H^2(\Omega)\cap H^1_0(\Omega)$ and $D(A^{\frac{1}{2}})= H^1_0(\Omega).$ 
\\Let us note that system \eqref{memory+integral} is analogous to system \eqref{memory+source}, with the only difference given by the nonlinearity. So, arguing as in Example \ref{ex1}, \eqref{memory+integral} can be rewritten as \eqref{memory+source_riscritta} and then as an abstract first order equation.
\\Now, consider the functional
$$
\psi (u):= \frac{1}{p +2} \left(\int_{\Omega} |u(x)|^{2}dx\right)^{\frac{p+2}{2}} =\frac{1}{p +2}\lVert u\rVert_{L^2(\Omega)}^{p+2},\quad \forall u\in L^2(\Omega).
$$
Note that $\psi$ is well defined. Also, the G\^ateaux derivative of $\psi$ at any point $u\in L^2(\Omega)$ is given by
$$D\psi(u)(v)= \left(\int_{\Omega} |u(x)|^{2}dx\right)^{\frac{p}{2}}\int_{\Omega}u(x)v(x)dx,$$
for any $v\in L^2(\Omega)$. Then, $\psi$ is defined in the whole $L^2(\Omega)$ and 
$$\nabla\psi(u)=\left(\int_{\Omega} |u(x)|^{2}dx\right)^{\frac{p}{2}}u(x),\quad \forall u\in L^2(\Omega),$$
is the unique vector representing $D\psi(u)$ in the Riesz isomorphism. So $(H_1)$ is trivially satisfied.
Arguing as in \cite{LuoXiao}, we can find a positive constant $C>0$ such that
\begin{equation}\label{ass2}
	\lVert \nabla\psi(u)-\nabla\psi(v)\rVert^2_{L^2(\Omega)}\leq C(\lVert u\rVert^{2p}_{H^1_0(\Omega)}+\lVert v\rVert^{2p}_{H^1_0(\Omega)})\lVert u-v\rVert^2_{H^1_0(\Omega)},
\end{equation}
for all $u,v\in H^1_0(\Omega).$ Thus, for any $r> 0$ and for all $u,v\in H^1_0(\Omega)$ with $\lVert \nabla u\rVert_{L^2(\Omega)},\lVert \nabla v\rVert_{L^2(\Omega)}\leq r$, since from Poincaré inequality $\lVert\cdot\rVert_{H^1_0(\Omega)}$ and $\lVert \nabla (\cdot)\rVert_{L^2(\Omega)}$ are equivalent norms on $H^1_0(\Omega)$, from \eqref{ass2} we get
$$\lVert \nabla\psi(u)-\nabla\psi(v)\rVert^2_{L^2(\Omega)}\leq 2r^{2p}C\lVert \nabla u-\nabla v\rVert^2_{L^2(\Omega)},$$
from which
$$\lVert \nabla\psi(u)-\nabla\psi(v)\rVert_{L^2(\Omega)}\leq \sqrt{2C}r^{p}\lVert \nabla u-\nabla v\rVert_{L^2(\Omega)},$$
Hence, $(H_2)$ is satisfied. 
\\Finally, we prove that $(H_3)$ holds true. Note that $\psi(0),\nabla\psi(0)=0$. Also, using \eqref{ass2} with $v=0$ and Poincaré inequality, for all $u\in H^1_0(\Omega)$ we can write
$$\lVert \nabla\psi(u)\rVert^2_{L^2(\Omega)}\leq C\lVert \nabla u\rVert^{2p}_{L^2(\Omega)}\lVert \nabla u\rVert^2_{L^2(\Omega)},$$
which implies
$$\lVert \nabla\psi(u)\rVert_{L^2(\Omega)}\leq \sqrt{C}\lVert \nabla u\rVert^{p}_{L^2(\Omega)}\lVert \nabla u\rVert_{L^2(\Omega)}$$
Thus, $(H_3)$ is fulfilled with $h(z)=\sqrt{C}z^{p}$, for all $z\geq 0$, which is a continuous and strictly increasing function.
\\Let us define the energy as follows:
$$
\begin{array}{l}
	\displaystyle{E(t):= \frac{1}{2}\int_{\Omega} |u_t(x,t)|^2 dx +\frac{1-\tilde{\beta}}{2}\int_{\Omega} |\nabla u(x,t)|^2 dx -\psi(u(x,t))}\\
	\hspace{2 cm}
	\displaystyle{ +\frac{1}{2}\int_{t-\tau}^t \int_{\mathcal{O}} |k(s)|\cdot |u_t(x,s)|^2 dx ds +\frac{1}{2}\int_0^{+\infty} \beta (s)\int_{\Omega} |\nabla \eta^t(x,s)|^2 dx ds.}
\end{array}
$$
Then, applying Theorem \ref{teorema_finale} to this model, we get well-posedness and exponential decay of the energy for solutions corresponding to suitably small initial data provided that the condition \eqref{ipotesi2} is satisfied.

\bigskip
\noindent {\bf Acknowledgements.} The authors are members of  {\it Gruppo Nazionale per l'Analisi Ma\-te\-matica, la Probabilit\`a e le loro Applicazioni (GNAMPA)} of the Istituto Nazionale di Alta Matematica (INdAM).
They  are partially supported by PRIN 2022  (2022238YY5) {\it Optimal control problems: analysis,
approximation and applications} and by INdAM GNAMPA Project {\it ``Modelli alle derivate parziali per interazioni multiagente non 
simmetriche"}(CUP E53C23001670001). 
C. Pignotti is also partially supported by
PRIN-PNRR 2022 (P20225SP98) {\it Some mathematical approaches to climate change and its impacts}.

\end{document}